 \newcommand\wh[1]{\hstretch{3}{\hat{\hstretch{0.33}{#1\kern+0.12em}}}}
\newcommand{\RR}{\mathbb R}
\newcommand{\CC}{\mathbb C}
\newcommand{\ZZ}{\mathbb Z}
\DeclareMathOperator{\Id}{Id}
\newcommand{\bma}{\begin{pmatrix}}
\newcommand{\ema}{\end{pmatrix}}
\newcommand{\bsm}{\left(\begin{smallmatrix}}
\newcommand{\esm}{\end{smallmatrix}\right)}
\newcommand{\dd}{\mathrm{d}}
\DeclareMathOperator{\vN}{vN}
\newcommand{\detFK}{{\det}_{\mathrm{FK}}}
\renewcommand{\Re}{\operatorname{Re}}
\DeclareMathOperator{\lk}{lk}
\DeclareMathOperator{\im}{im}
\DeclareMathOperator{\tr}{tr}
\newcommand{\bs}[1]{{\boldsymbol{#1}}}
\newcommand{\norm}[1]{\lVert{#1}\rVert}
\newtheorem{theorem}{Theorem}
\newtheorem{corollary}[theorem]{Corollary}
\newtheorem{lemma}[theorem]{Lemma}
\newtheorem{proposition}[theorem]{Proposition}
\theoremstyle{definition}
\newtheorem{definition}[theorem]{Definition}
\newtheorem{example}[theorem]{Example}
\newtheorem{remark}[theorem]{Remark}
\keywords{}
\crefname{collitem}{}{}
\begin{document}
\title[Triangulations and combinatorial zeta functions]{Combinatorial zeta functions counting triangles}
\author{Leo Benard}
\address{Mathematisches Institut, Georg--August Universit\"at G\"ottingen $\&$
Institut de Mathématiques de Marseille, Aix--Marseille University}
\email{leo.benard@univ-amu.fr}
\author{Yann Chaubet}
\address{Department of Pure Mathematics and Mathematical Statistics, University of
Cambridge, Cambridge}
\email{y.chaubet@dpmms.cam.ac.uk}
\author{Nguyen Viet Dang}
\address{Sorbonne Université and Université Paris Cité, CNRS, IMJ-PRG, F-75005 Paris, France.\\
Institut Universitaire de France, Paris, France.}
\email{dang@imj-prg.fr }
\author{Thomas Schick}
\address{Mathematisches Institut, Georg--August Universit\"at  G\"ottingen}
\email{thomas.schick@math.uni-goettingen.de}
\subjclass[2010]{57Q70} 

\begin{abstract}
In this paper, we compute special values of certain combinatorial zeta
functions counting geodesic paths in the $(n-1)$-skeleton of a triangulation
of an $n$-dimensional manifold. We show that they carry a topological
meaning. As such, we recover the first Betti and $L^2$-Betti numbers of
compact manifolds, and the linking number of pairs of null-homologous knots in
a 3-manifold.

The tool to relate the two sides (counting geodesics/topological invariants)
are random walks on higher dimensional skeleta of the triangulation.
\end{abstract}

\maketitle

\section{Introduction}
Given a hyperbolic surface $\Sigma$, Fried \cite{Fried} initiated the study of the behavior at the origin of the so-called Ruelle zeta function \cite{ruelle1976zeta}
$$
\zeta_\Sigma(s) = \prod_{\gamma} \left(1-e^{-s \ell(\gamma)}\right),
$$
where the product runs over all primitive closed geodesics of $\Sigma$, $\ell(\gamma)$ is the length of $\gamma$. This
infinite product is convergent whenever $\Re s > 1$, and admits a meromorphic
extension for $s$ lying in the whole complex plane. It
is closely related to the Selberg zeta function
$$
S_\Sigma(s)=\prod_{\gamma}\prod_{k=0}^\infty (1-e^{-(s+k)\ell(\gamma)})
$$
introduced and studied in \cite{Selberg}. Indeed,
$$
\zeta_\Sigma(s)=S_\Sigma(s)/S_{\Sigma}(s+1)
$$
and the meromorphic extension of
$\zeta_\Sigma(s)$ then follows from Selberg's work
\cite[p.~75]{Selberg}). We also mention the work~\cite{Fried1} which studies the relation between Ruelle and Selberg zeta functions and where the continuation is proved using dynamical methods. The zeros of $S_\Sigma(s)$ are 
related to the spectrum of the Laplacian. Indeed, one way of proving the analytic continuation of Selberg and Ruelle zeta functions (and also Poincar\'e series) is to relate these dynamical counting functions to the Laplacian and use the spectral theory of the Laplacian to prove meromorphic extension. 
This mechanism, which is sometimes called quantum--classical correspondence,
has a quite immediate interpretation in the combinatorial setting and it is the goal of the present paper to illustrate it by simple yet striking examples. 
In \cite{Fried}, Fried showed that $\zeta_\Sigma(s)$ vanishes at order $-\chi(\Sigma)$ at $s=0$, and this result was extended to arbitrary negatively curved surfaces by Dyatlov--Zworski \cite{DZ}. An important consequence is that the topology of a negatively curved surface is fully determined by its length spectrum, the set of lengths of primitive closed geodesics.

%

The aim of this note is to prove analogous statements in a combinatorial
setting. Throughout, $M$ is a compact connected oriented manifold of dimension
$n > 1$, endowed with a triangulation $\mathscr T$. In fact, as in the
classical case of the Ruelle and Selberg zeta functions, we prove a relation
between combinatorial closed geodesics entering the definition of a
combinatorial zeta function and the spectrum of the combinatorial Laplacian.

\begin{definition}\label{def:geodesic}
  A \textit{combinatorial geodesic path} in the $n-1$ skeleton
  $\mathscr T^{(n-1)}$ of the triangulation $ \mathscr T$ is a finite sequence
  $c = (\sigma_1, \dots, \sigma_q)$ of adjacent $(n-1)$-simplices such that no
  pair $(\sigma_k, \sigma_{k+1})$ of consecutive simplices bound the same
  $n$-simplex. A combinatorial geodesic path $c = (\sigma_1, \dots, \sigma_q)$ is
  \textit{closed} if $\sigma_q$ is adjacent to $\sigma_1$ and
  $(\sigma_q, \sigma_1)$ do not bound the same simplex. A
  \textit{combinatorial closed
    geodesic} is an equivalence class of closed geodesic paths,
  where we identify two closed geodesic paths if one is a cyclic permutation
  of the other. We denote by $\mathcal P$ the (in general infinite) set of
  primitive combinatorial closed geodesics, where a combinatorial closed
  geodesic is called primitive if it 
  is not a power of a shorter one. The length of a combinatorial closed geodesic
  $\gamma = [(\sigma_1, \dots, \sigma_q)]$ is denoted by $|\gamma| = q$ while
  $\varepsilon_\gamma \in \{-1, 1\}$ denotes its \textit{reversing index},
  which is the parity of how often orientations are flipped traversing the
  combinatorial closed geodesic and which is defined in Definition \ref{def:reversing_ind}.
\end{definition}

Note that, traversing a combinatorial closed geodesic backwards will give
another, in general different combinatorial closed geodesic. 

The first main result of this paper is the following theorem. 
\begin{theorem}\label{theo:mainfried}
Assume that $M$ is a compact oriented manifold of dimension $n\geqslant 2$ with
triangulation $\mathscr T$. The combinatorial zeta function 
$$
\zeta_\mathscr T(z) = \prod_{\gamma\in \mathcal{P}}\left(1-\varepsilon_\gamma z^{|\gamma|}\right),
$$
converges for $|z|$ small enough. It is a polynomial function of degree $|\mathscr T^{(n-1)}|$ in $z$ and vanishes of order $b_1(M)$ at $z=(n+2)^{-1}$.
Here, $|\mathscr T^{(n-1)}|$ denotes the cardinality of the $(n-1)-$skeleton
of the triangulation.
\end{theorem}

\begin{remark}
  Note that, by Poincar\'e duality, $b_1(M)=b_{n-1}(M)\leqslant |\mathscr
  T^{(n-1)}|$ so that the vanishing order in Theorem \ref{theo:mainfried}
  indeed is bounded by the degree of the polynomial.
\end{remark}

A consequence of \cref{theo:mainfried} is the following result.

\begin{corollary}\label{corol:1stBetti}
 The numbers $(n_k)_{k\in\mathbb{N}}$ (resp $(m_k)_{k\in \mathbb{N}}$) of all combinatorial closed geodesics of lengths $k \leqslant |\mathscr T^{(n-1)}|$ and reversing index $+1$ (resp $-1$) determine the first Betti number of a compact manifold $M$.
 \end{corollary}
 \begin{proof}
By \cref{theo:mainfried}, expanding formally the infinite product $\zeta_\mathscr T(z)$ in $z$ yields a priori a formal power
series in $z$. However, we know that $\zeta_\mathscr T(z)$ is in fact a polynomial of degree $|\mathscr T^{(n-1)}| $, so all the powers of $z$ of degree $>|\mathscr T^{(n-1)}| $ vanish in the power series expansion due to certain cancellations. Taking just the product for all $|\gamma|
\leqslant |\mathscr T^{(n-1)}|$:
$$P(z)=\prod_{\gamma\in \mathcal{P}, \vert\gamma\vert\leqslant |\mathscr T^{(n-1)}|}\left(1-\varepsilon_\gamma z^{|\gamma|}\right)$$
gives a polynomial (of high degree) $P$ which decomposes as:
$$P(z)=\zeta_\mathscr T(z) +z^{|\mathscr T^{(n-1)}|+1}\mathbb{Z}[z],$$ so $P$ exactly equals 
the full combinatorial zeta function modulo terms of degree $\geqslant |\mathscr T^{(n-1)}|+1$. The
vanishing order of this polynomial near $z=(n+2)^{-1}$ therefore also gives
the first Betti number.
\end{proof}
\noindent Another straightforward corollary is the following:
\begin{corollary}\label{corol:2_and_4_mf}
The Euler characteristic of a connected orientable surface is determined
by its \emph{combinatorial length spectrum}, given by the set of pairs
$(\varepsilon_\gamma, |\gamma|)$ for $\gamma \in \mathcal P$ with $|\gamma|
\leqslant |\mathscr T_1|$.

More generally, the combinatorial data of the triangulation (more
specifically, the number of {simplices and of} combinatorial closed geodesics of index $\pm 1$)
is sufficient 
to recover all the Betti numbers of a triangulated compact connected
orientable manifold of dimension smaller than 5.
\end{corollary}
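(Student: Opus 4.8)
The plan is to derive both assertions from \cref{theo:mainfried}, supplemented by Poincaré duality and, for non-orientable $4$-manifolds, a transfer argument on the orientation double cover.

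First I would reconstruct the polynomial $\zeta_{\mathscr T}$ from the length data. The integer $D := |\mathscr T^{(n-1)}|$ is known --- it is the cut-off $|\mathscr T_1|$ in the surface statement, and one of the given face numbers in the general one --- and, forming the finite product $P(z) = \prod_{|\gamma| \leqslant D}(1 - \varepsilon_\gamma z^{|\gamma|})$ and discarding all terms of degree $> D$, one recovers $\zeta_{\mathscr T}$: by \cref{theo:mainfried} the Euler factors omitted from $P$ multiply to $1 + O(z^{D+1})$, while $\deg \zeta_{\mathscr T} = D$ (this is precisely the argument behind the first corollary). From $\zeta_{\mathscr T}$ one then reads off $b_1(M)$, the order of vanishing at $z = (n+2)^{-1}$. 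Second I would detect the orientability of $M$ from the signs: granting that the reversing index equals the value $w_M(\gamma) \in \{\pm 1\}$ of the orientation character of $M$ on $\gamma$ (this is the point of the definition in \S\ref{sec:fried}), $M$ is orientable iff $\varepsilon_\gamma = 1$ for all $\gamma$; and if $M$ is non-orientable, an orientation-reversing geodesic loop exists, whence a shortcutting argument --- splitting any geodesic loop that revisits an $(n-1)$-simplex into two strictly shorter geodesic loops and using multiplicativity of $w_M$ --- yields one of length $\leqslant D$, so that the sign $-1$ appears among the listed pairs $(\varepsilon_\gamma, |\gamma|)$.

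For a connected closed surface $\Sigma$ this already finishes the first assertion: $b_0 = 1$, $b_1(\Sigma)$ is as above, and $b_2(\Sigma) \in \{0,1\}$ is $1$ iff $\Sigma$ is orientable, so $\chi(\Sigma) = 1 - b_1(\Sigma) + b_2(\Sigma)$ is determined by the combinatorial length spectrum. For the general statement, put $n = \dim M \leqslant 4$, assume $M$ connected (otherwise $b_0$ is the number of components), and use in addition the face numbers $f_i$, which give $\chi(M) = \sum_i (-1)^i f_i$. One always knows $b_0 = 1$, $b_1$ from $\zeta_{\mathscr T}$, and $b_i = 0$ for $i > n$. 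For $n \leqslant 2$ this forces $b_2 = \chi - 1 + b_1$; for $n = 3$ one has $\chi = 0$ and Poincaré duality gives $b_3 \in \{0,1\}$ according to orientability, hence $b_2 = b_1 - 1 + b_3$; for $n = 4$ orientable, $b_4 = 1$, $b_3 = b_1$ and $b_2 = \chi - 2 + 2b_1$. The remaining case is $n = 4$ non-orientable, where $b_4 = 0$ and only $b_3$ is still unknown: here I would pass to the (connected, orientable, closed, $4$-dimensional) orientation double cover $\widetilde M \to M$. Its lifted triangulation has length spectrum an explicit function of that of $M$ --- each primitive closed geodesic $\gamma$ with $\varepsilon_\gamma = 1$ lifts to two primitive closed geodesics of length $|\gamma|$, each $\gamma$ with $\varepsilon_\gamma = -1$ lifts to one of length $2|\gamma|$, and all lifts have reversing index $1$ --- so $b_1(\widetilde M)$ is computable as above. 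Since $H_1(\widetilde M; \QQ) \cong H_1(M;\QQ) \oplus H_1(M;\QQ_w)$ as $\QQ[\ZZ/2]$-modules and $H_3(M;\QQ) \cong H^1(M;\QQ_w)$ by twisted Poincaré duality, one gets $b_3(M) = b_1(\widetilde M) - b_1(M)$, and finally $b_2 = \chi - 1 + b_1 + b_3$.

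The hard part, I expect, consists of the two ingredients that are not purely formal. The first is the identity $\varepsilon_\gamma = w_M(\gamma)$ together with the fact that every orientation-reversing free homotopy class of loops is represented by a geodesic loop --- both needed for orientability to be read off the spectrum (the subsequent reduction to length $\leqslant D$ is elementary). The second is the double-cover step in dimension $4$: one must check that lifts of primitive closed geodesics remain primitive, that the length spectrum of $\widetilde M$ is exactly the stated function of that of $M$, and that $b_3(M) = b_1(\widetilde M) - b_1(M)$. Finally, the restriction to dimension $\leqslant 4$ is precisely where this linear bookkeeping stops: for a closed orientable $5$-manifold the data $(\chi, b_0, b_1)$ pin down $b_0, b_1, b_4, b_5$ but leave the middle pair $b_2 = b_3$ free, as $S^5$ and $S^2 \times S^3$ show.
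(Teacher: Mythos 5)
The paper states this corollary without proof (``straightforward''), so let me assess your argument on its own terms. The skeleton of your proposal --- reconstruct the polynomial $\zeta_{\mathscr T}$ from the truncated product as in the first corollary, read off $b_1$ as the order of vanishing at $(n+2)^{-1}$, get $\chi$ from the face numbers, and close the system with Poincar\'e duality in dimensions $\leqslant 4$ --- is surely what the authors intend, and that bookkeeping is correct \emph{for orientable $M$} (note that the proof of \cref{theo:fried} itself fixes $M$ oriented, since it passes from $\dim\ker\Delta_{n-1}=b_{n-1}$ to $b_1$ via Poincar\'e duality).

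The gap is the ingredient you yourself flag as the hard part and then take for granted: the identity $\varepsilon_\gamma = w_M(\gamma)$ is false. The reversing index is a quantity intrinsic to the $(n-1)$-skeleton with its chosen cell orientations: a pair is compatible when the shared $(n-2)$-face carries opposite signs in the two boundaries, so $\varepsilon_\gamma$ records whether the ribbon of $(n-1)$-cells swept out by $\gamma$ can be coherently oriented --- it never sees the top-dimensional cells, hence cannot see whether $\gamma$ reverses the orientation of $M$. Concretely, \cref{fig:sign} exhibits a closed geodesic $\gamma_2$ with $n_{\gamma_2}=1$, i.e.\ $\varepsilon_{\gamma_2}=-1$, drawn in a planar (orientable) patch of a triangulation; and the proof of \cref{prop:Lapl} shows $T_{\sigma_1,\sigma_2} = -\langle \partial\sigma_1,\partial\sigma_2\rangle$ for admissible pairs, so the signs are forced by the requirement $\Delta_{n-1}=(N+2)\Id - T$ and have nothing to do with the orientation character. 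With this, your orientability test (``$M$ orientable iff all $\varepsilon_\gamma=1$''), the shortcutting step that invokes multiplicativity of $w_M$, and the entire double-cover computation for non-orientable $4$-manifolds (whose lifting rule and the claim that all lifts have $\varepsilon=1$ are likewise governed by $w_M$, not by $\varepsilon_\gamma$) all collapse. The issue is not cosmetic: for the first assertion, a triangulated $S^2$ and a triangulated $\mathbb{R}P^2$ both have $b_1(\,\cdot\,;\RR)=0$, hence the same order of vanishing at $z=1/4$, yet $\chi=2$ versus $\chi=1$; your proof offers no valid mechanism to separate them, and neither does the paper unless one reads the corollary (as one probably should) with an implicit orientability hypothesis, in which case $\chi = 2-b_1$ and the remaining, correct part of your argument suffices.
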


For a non-compact normal covering $\widehat M$ of $M$ the random
walk on the vertices and higher dimensional simplices of a lifted
triangulation gives information about the spectral properties of the Laplacian
near zero, which in turn determines topological $L^2$-invariants of the
manifold $M$. Similarly, our result has a version which holds for non-compact normal
coverings, as we explain now. We refer to section \ref{subsec:L2} for the
definitions related to $L^2$-invariants.

We start with a triangulation $\mathscr T$ of an $n$-dimensional compact manifold $M$. 
Given a quotient $\pi$ of the fundamental group $\pi_1(M)$, it acts on the
corresponding cover $\wh{M}$ of $M$  with its lifted triangulation
$\wh{\mathscr T}$. This defines a chain complex $C_*^{(2)}(\mathscr T, \pi)$
of Hilbert spaces with unitary action of $\pi$ (specifically, Hilbert $\mathcal N(\pi)$-modules). As $\mathbb{C}$-vector spaces these are (typically infinite-dimensional) Hilbert spaces, but the action of $\pi$ endows them with the structure of a direct sum of finitely many copies of the $L^2$-completion $\ell^{(2)}(\pi)$ of $\mathbb{C}\pi$, on which the von Neumann algebra $\mathcal N(\pi)$ acts naturally. The von Neumann algebra is the algebra  of $\pi$-equivariant bounded operators from $\ell^{(2)}(\pi)$ to itself. 

In turn, we can define a combinatorial Laplacian $\Delta_k^{(2)}$ acting as a
bounded operator on $C_k^{(2)}(\mathscr T, \pi)$, and the von Neumann
dimension (defined by \cref{eq:vNdim}) of its kernel is the $k$-th
$L^2$-Betti number $b_k^{(2)}(M, \pi)$. Let us remark that when $\pi$ is finite then
this recovers the classical Betti numbers normalized by multiplication
with $\frac{1}{|\pi|}$. One should think of the $L^2$-Betti numbers as
correspondingly normalized Betti numbers of $\widehat M$ which make sense even
if $\pi$ is infinite. If $\pi =\pi_1(M)$,
then one usually drops the group from the notation and refers to
$b^{(2)}_k(M)$ as the $k$-th $L^2$-Betti number of $M$.

We fix a fundamental domain $\mathcal F$ in $\wh{\mathscr{T}}$ for the action
of the group $\pi$, and we denote by $\wh{\mathcal P}$ the set of primitive
combinatorial closed geodesics in $\wh{\mathscr{T}}$ which start in $\mathcal F$ and travel
in $\wh{\mathscr{T}}$ (and of course end in $\mathcal F$).

The second main result of this paper is the following theorem:
\begin{theorem}
\label{theo:mainL2}
The combinatorial $L^2$-zeta function 
$$\zeta^{(2)}_{\wh{\mathscr{T}}}(z) = \prod_{\gamma\in {\wh{P}}}\left(1-\varepsilon_\gamma z^{|\gamma|}\right)
$$
converges for $|z|$ small enough, and it extends as a real analytic function
on the disk of diameter $(0, \frac{1}{n+2})$. Moreover,
$$
\zeta^{(2)}_{\wh{\mathscr T}}\left(\frac{1}{n+2}-z\right) = z^{b_1^{(2)}(M,\pi)} f(z) 
$$
with a function $f$ which is continuous at $0$.  If $\Delta_{n-1}^{(2)}$ is of
determinant class, then
\begin{equation*}
  f(0) =(n+2)^{2b_1^{(2)}(M,\pi)-|\mathscr T^{(n-1)}|} \cdot \detFK(\Delta_{n-1}^{(2)}).
\end{equation*}
If $\Delta_{n-1}^{(2)}$ is not of determinant class, then $f(0)=0$ but $f$
converges to $0$ slower than any power of $z$ in the sense that for all $C>0$,
$\alpha>0$ there is $\varepsilon>0$ such that
\begin{equation*}
  f(z)> Cz^\alpha,\qquad 0 < z < \varepsilon.
\end{equation*}
In particular, the function $\zeta^{(2)}_{\wh{\mathscr T}}$ and hence the
combinatorial closed geodesics on $\widehat M$ 
determine $b_{n-1}^{(2)}(M, \pi)$.
\end{theorem}

Note that \cref{theo:mainL2} says that the
behavior of the function $\zeta^{(2)}_{\wh{\mathscr{T}}}$ near  $\frac 1 {n+2}$
recovers the first $L^2$-Betti number $b_{1}^{(2)}$ just as in the finite
case. Of course, specializing to $\pi=\{1\}$, \cref{theo:mainL2} also makes a
statement about $M$ and its zeta function $\zeta_{\mathscr T}(s)$. The
Fuglede--Kadison determinant $\det_{\text{FK}}$ in this case is just the product of the non-zero
eigenvalues of the matrix $\Delta_{n-1}$.
Still, \cref{theo:mainfried} in this case  is slightly stronger: thanks to the
finiteness, the zeta function is shown to be a polynomial there. This cannot
be expected and is not true in general for combinatorial $L^2$-zeta functions. Note also that $L^2$-Betti numbers are not
necessarily integers; they are non-negative real numbers. A question of Atiyah
\cite{Atiyah} asked
if they are always rational, which has been answered negatively by
now, the first example in \cite{GLSZ}.

Concerning the determinant class condition, the determinant conjecture
\cite[Conjecture 13.2]{Lueck} would imply that any of the operators
$\Delta^{(2)}_k$ defined above are of determinant class. This conjecture is
known to be true for a large class $\mathcal G$ of groups $\pi$ described in
\cite[Section 13.1.3]{Lueck} and \cite{Schick}, and then for \emph{any}
covering $\wh M$, with covering group such a $\pi$, of any compact manifold $M$ (or
even compact CW-complex $M$). In particular, the class $\mathcal G$
contains every lattice in a matrix Lie group (and more generally every residually
finite group), all amenable groups, and is closed under extensions with amenable
quotients, colimits along directed systems of inclusions, inverse limits,
passage to a subgroup and quotients by finite kernels.


A striking interpretation of \cref{theo:mainL2} is the following. Recall first
that a combinatorial closed geodesics in $\wh{\mathscr T}^{(n-1)}$ is a sequence of
$(n-1)$-dimensional simplices which starts and ends at the same simplex. Such
a sequence retracts onto a closed loop (in the sense of a continuous map
$\gamma\colon S^1\to \widehat M$) in the manifold $\wh M$, which projects to a
closed loop in $M$ whose base point is not well defined. The latter
canonically represents a conjugacy class in $\ker (\pi_1(M) \to \pi)$.
\begin{corollary}
Counting combinatorial closed geodesics of all lengths $k\in\mathbb{N}$ in $\mathscr
T^{(n-1)}$ which represent a conjugacy
class in $\ker (\pi_1(M) \to \pi)$ recovers the first $L^2$-Betti number $b_1^{(2)}(M, \pi)$.

In particular, counting homotopically trivial
combinatorial closed geodesics of all lengths $k\in\mathbb{N}$ on a closed
surface $\Sigma$ recovers $b_1^{(2)}(\Sigma) = -\chi(\Sigma)$.
\end{corollary}

As a final remark, let us notice that each $(n-1)$-dimensional simplex of our triangulation lies in
the boundary of exactly two $n$-dimensional simplices and each $n$-simplex has
precisely $(n+1)n/2$  codimension $2$-faces (no identifications). This is the necessary
condition for our crucial \cref{eq:deltan+2} to hold and then all the results
hold, compare \cref{rem:generalize_trian}. This is weaker than
asking $M$ to be a manifold, it is just asking $M$ to be non-singular in
codimension one. We also make use of Poincaré duality, if this is not
available, we have to replace the first by the $n-1$-th ($L^2$)-Betti number
and all the results continue to hold. In particular, we don't need and don't
use a smooth structure on $M$.
\medbreak

The last main result of this paper concerns the linking number of knots.
Recently, the third author and Rivi\`ere showed in \cite{Dang_Riviere} that the linking number $\mathrm{lk}(\kappa_1, \kappa_2)$ of two homologically trivial curves $\kappa_1, \kappa_2$ in the unit tangent bundle of a negatively curved surface can be recovered as the regularized value at $s=0$ of the Poincaré series
$$
\eta(s) = \sum_{c} e^{-s \ell(c)},
$$
where the sum runs over all geodesic paths $c$ joining orthogonally the projections of $\kappa_1$ and of $\kappa_2$ on the surface $\Sigma$ (see also \cite{chaubet2022poincare} for similar results on surfaces with boundary).

Our last result is a combinatorial {analog} of this result. We fix any compact
$3$-manifold  $M$, again with a triangulation $\mathscr T$.

We let $\mathscr T^\vee$ be the dual  {polyhedral} decomposition of $\mathscr
T$ (see \S\ref{sec:lapl} and  \cref{fig:dual}). Let $\kappa_1$ and $\kappa_2$
be two rationally null-homologous, oriented knots in $M$ which are one-chains
in $\mathscr T$ or $\mathscr T^\vee$, respectively. Note that every pair of null-homologous knots is homologous to such a pair $(\kappa_1, \kappa_2)$.
We denote by $\mathcal G^\perp(\kappa_1, \kappa_2)$ the set of
{\emph{orthogeodesic paths}} from $\kappa_1$ to $\kappa_2$, meaning by
definition combinatorial geodesic paths $c = (\tau_1, \ldots, \tau_q)$ in the $2$-skeleton of $\mathscr T$ such that the first simplex $\tau_1$ of $c$ bounds part of $\kappa_1$, and $\kappa_2$ intersects the last simplex $\tau_q$ of $c$, as in  \cref{fig:link}.

For $c \in \mathcal G^\perp(\kappa_1, \kappa_2)$, we denote by $\varepsilon_c
\in \{-1, 1\}$ its reversing index and by $m_c \in \ZZ$ its \textit{incidence
  number} on $(\kappa_1, \kappa_2)$; we refer to \S\ref{sec:link} for a proper
definition of these quantities --- we nevertheless mention that if the knots
$\kappa_j$ are \textit{simple}, in the sense that each $2$-simplex of
$\mathscr T$ is touched by $\kappa_1$ at most once and each $1$-cell of
$\mathscr T^\vee$ appears at most once in $\kappa_2$, then $m_c \in \{-1, 1\}.$

\begin{figure}[h]
\def\svgwidth{0.4\columnwidth}
\includegraphics[scale=0.5]{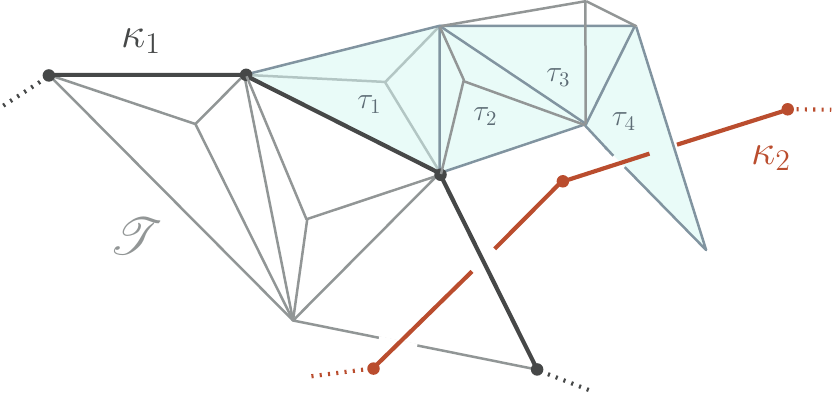}
\caption{
\label{fig:link} An orthogeodesic $c = (\tau_1, \tau_2, \tau_3, \tau_4)$ linking $\kappa_1$ and $\kappa_2$. The knot $\kappa_1$ is a one-chain in $\mathscr T$ while $\kappa_2$ is a one-chain in $\mathscr T^\vee$.
}
\end{figure}

Our final main theorem is the following result:
\begin{theorem}
\label{theo:mainlink}
The combinatorial Poincaré series
$$
\eta(z) = \sum_{c \in \mathcal G^\perp(\kappa_1, \kappa_2)} \varepsilon_c m_c z^{|c|}
$$ 
converges whenever $|z|$ is small enough. Moreover, it extends to a rational
function in $z$, regular at $z=1/(n+2)$, 
and
$$
\eta\left(\frac 1 {n+2}\right) = \lk(\kappa_1, \kappa_2).
$$
 
\end{theorem}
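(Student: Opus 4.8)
The plan is to realize $\eta$ as a matrix coefficient of the resolvent of the transfer operator $\mathcal{T}$ used to prove \cref{theo:mainfried}, and then to exploit what is the algebraic heart of that proof: the identity $\mathcal T=(N+2)\,\Id-\Delta_{n-1}$ on $C_{n-1}(\mathscr T)$, $\Delta_{n-1}$ being the combinatorial Laplacian (see \S\ref{sec:lapl}). Recall that the nonzero entries of $\mathcal T$ are the signs $\pm1$ prescribed by the local reversing index, so that a geodesic path $c=(\tau_1,\dots,\tau_q)$ contributes the weight $\varepsilon_c$ to the $(\tau_q,\tau_1)$ entry of $\mathcal T^{q-1}$.

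First I would turn the two knots into a vector and a linear form on $C_{n-1}(\mathscr T)$. Writing $\kappa_1=\sum_e a_e\, e$ as a $1$-chain of $\mathscr T$, the condition that $\tau_1$ bound part of $\kappa_1$, together with the incidence number $m_c$ and the sign conventions of \S\ref{sec:link}, singles out the initial datum $s_{\kappa_1}:=\partial_{n-1}^{*}\kappa_1$ (the coefficient of a polyhedron $\tau$ being $\sum_{e\subset\tau}[\tau:e]a_e$). Dually, identifying the $1$-chain $\kappa_2$ of $\mathscr T^\vee$ with the $(n-1)$-chain $\kappa_2^{\flat}\in C_{n-1}(\mathscr T)$ Poincaré dual to it, the condition that $\kappa_2$ meet $\tau_q$ defines the linear form $\ell_{\kappa_2}:=\langle\kappa_2^{\flat},\,\cdot\,\rangle$. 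A careful bookkeeping of signs then shows that, for each orthogeodesic $c$ of length $q$, the number $\varepsilon_c m_c$ is exactly the contribution of $c$ to $\langle\ell_{\kappa_2},\mathcal T^{q-1}s_{\kappa_1}\rangle$, so that, for $|z|$ sufficiently small (where the Neumann series $\sum_k(z\mathcal T)^k$ converges),
\[
\eta(z)=\sum_{q\ge1} z^{q}\,\big\langle\ell_{\kappa_2},\mathcal T^{\,q-1}s_{\kappa_1}\big\rangle=z\,\big\langle\ell_{\kappa_2},(\Id-z\mathcal T)^{-1}s_{\kappa_1}\big\rangle .
\]
The right-hand side is a rational function of $z$, regular at the origin, whose poles lie among the reciprocals of the eigenvalues of $\mathcal T$; this provides the rational extension asserted in the theorem and settles its first two claims.

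Next I would prove regularity at $z_0=1/(N+2)$. Since $\mathcal T=(N+2)\,\Id-\Delta_{n-1}$ and $\Delta_{n-1}\ge0$, the operator $\Id-z\mathcal T$ can fail to be invertible at $z_0$ only through the eigenvalue $N+2$ of $\mathcal T$, whose eigenspace is $\ker\Delta_{n-1}=\mathcal H_{n-1}$, the space of harmonic $(n-1)$-chains (of dimension $b_{n-1}(M)=b_1(M)$). As $\Delta_{n-1}$ is self-adjoint, $\mathcal T$ preserves the splitting $C_{n-1}(\mathscr T)=\mathcal H_{n-1}\oplus\mathcal H_{n-1}^{\perp}$, so the principal part of $z\mapsto(\Id-z\mathcal T)^{-1}s_{\kappa_1}$ at $z_0$ is carried by the $\mathcal H_{n-1}$-component of $s_{\kappa_1}$. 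But $s_{\kappa_1}=\partial_{n-1}^{*}\kappa_1\in\im\partial_{n-1}^{*}=(\ker\partial_{n-1})^{\perp}\subseteq\mathcal H_{n-1}^{\perp}$, because harmonic chains are in particular cycles. Hence that principal part vanishes and $\eta$ extends holomorphically through $z_0$. (This uses only that $\kappa_1$ is a cycle; the null-homology hypotheses are needed for $\lk$ to be defined and enter below.)

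Finally I would compute the value. By the previous step $(\Id-z\mathcal T)^{-1}s_{\kappa_1}$ may be evaluated at $z_0$ within $\mathcal H_{n-1}^{\perp}$, where $\Id-z_0\mathcal T=\tfrac1{N+2}\Delta_{n-1}$ is invertible with inverse $(N+2)\,G_{n-1}$, $G_{n-1}$ being the Green operator of $\Delta_{n-1}$; thus $(\Id-z_0\mathcal T)^{-1}s_{\kappa_1}=(N+2)\,G_{n-1}\,\partial_{n-1}^{*}\kappa_1$. The rational $(n-1)$-chain $B:=G_{n-1}\,\partial_{n-1}^{*}\kappa_1$ is a \emph{Seifert hypersurface} for $\kappa_1$, i.e.\ $\partial_{n-1}B=\kappa_1$: indeed $\partial_{n-1}\Delta_{n-1}=\Delta_{n-2}\partial_{n-1}$ gives $\partial_{n-1}G_{n-1}=G_{n-2}\partial_{n-1}$, whence
\[
\partial_{n-1}B=G_{n-2}\,\partial_{n-1}\partial_{n-1}^{*}\kappa_1=G_{n-2}\big(\Delta_{n-2}-\partial_{n-2}^{*}\partial_{n-2}\big)\kappa_1=\kappa_1-\Pi_{\mathcal H_{n-2}}\kappa_1=\kappa_1,
\]
using $\partial_{n-2}\kappa_1=0$ in the middle equality and $\Pi_{\mathcal H_{n-2}}\kappa_1=0$ in the last (here is where null-homology of $\kappa_1$ is used). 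Consequently, with the $z$-prefactor,
\[
\eta(z_0)=\tfrac1{N+2}\,\big\langle\ell_{\kappa_2},(N+2)B\big\rangle=\langle\kappa_2^{\flat},B\rangle=B\cdot\kappa_2=\lk(\kappa_1,\kappa_2),
\]
the last two identities being the definition of the intersection number of an $(n-1)$-chain of $\mathscr T$ with a $1$-chain of $\mathscr T^\vee$, and of the linking number of two rationally null-homologous knots in a $3$-manifold. The step I expect to be the main obstacle is the sign bookkeeping of the first two paragraphs --- proving that $\varepsilon_c m_c$ is exactly the coefficient that $\ell_{\kappa_2}\circ\mathcal T^{q-1}$ extracts from $s_{\kappa_1}$, and checking that the normalizations of $\mathcal T$, of the reversing index, and of the incidence number combine so that no stray power of $N+2$ survives --- together with establishing the identity $\mathcal T=(N+2)\,\Id-\Delta_{n-1}$, and identifying its $(N+2)$-eigenspace, for a general $N$-regular polyhedral decomposition and not merely a triangulation.
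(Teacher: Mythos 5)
Your proposal is correct and follows essentially the same route as the paper: express $\eta(z)$ as a matrix coefficient $z\langle (\Id - zT)^{-1}\partial^\star \kappa_1, \star\kappa_2\rangle$ via the orthogeodesic counting lemma, use $T=(N+2)\Id-\Delta_2$ and the Hodge decomposition to get regularity at $z=1/(N+2)$, and identify the value with $\lk(\kappa_1,\kappa_2)$ through the Green operator producing a (rational) Seifert surface. The only cosmetic difference is that you apply the Green operator on $2$-chains to $\partial^\star\kappa_1$, while the paper applies its inverse $K$ on $1$-chains before $\partial^\star$; these agree since $\partial^\star$ intertwines the Green operators, and your flagged sign bookkeeping is exactly the paper's \cref{prop:link}.
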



\medbreak

The proofs of all the results stated so far rely on the following simple but
fundamental fact (see \cref{prop:Lapl}):
the \emph{combinatorial Laplacian} introduced in \S\ref{sec:lapl} acts on the $(n-1)$-skeleton of any triangulation $\mathscr T$ as
\begin{equation}\label{eq:deltan+2}
\Delta = (n+2) \Id - T
\end{equation}
where $T$ is the transfer matrix of a signed geodesic random walk. {The above fact is
  a topological version of the famous Brydges--Fr\"ohlich--Sokal random walk
  representation widely used in Quantum Field
  Theory~\cite{fernandez2013random, GJ}}.

The random walk we consider is not a random walk on the vertex set of the
triangulation of $M$ or of the cover $\wh{M}$, but on the set of $n-1$
simplices. The former has been used famously to obtain information about
another topological $L^2$-invariant, namely the lowest Novikov-Shubin
invariant. Here, fundamental work of Varopoulos \cite{Varopoulos} implies that
the lowest Novikov-Shubin invariant is encoded in the sequence $p(n)$, where
$p(n)$ is the probability to return to the starting point after 
$n$ steps. Note that this is closely related to the number of combinatorial closed geodesics of
length $n$ in the $1$-skeleton.

In the G\"ottingen doctoral thesis \cite{Hoepfner} of Tim H\"opfner, a
generalization of the result of Varopoulos to Novikov-Shubin invariants of
higher degree $k$ is obtained. H\"opfner shows that they can be obtained from a
suitable signed random walk on the $k$-cells of the universal cover $\widetilde M$ of $M$. Our article adds to this 
circle of ideas by expressing the first (by Poincar\'e duality equal to $n-1$)
$L^2$-Betti number via a random walk on the $(n-1)$ skeleton.

In our proofs, we do crucially use the transfer matrix $T$ and its powers and
analyze its combinatorial and analytic features. This way, we analyze and use
the signed random walk described by $T$. We do not explicitly refer to
probabilistic results on this random walk, but we believe that our
investigation sheds light on the relation between this random walk and fine
topological, geometric and spectral properties of the manifold on which the random walk
takes place.

\medbreak

In the same flavour as the present work, we would like to mention two results that have been communicated to us:
\begin{itemize}
\item by Dang--Mehmeti \cite{DM24}, for $\Gamma$ a Schottky group acting on the Berkovich projective line, 
they are able to recover the number $g$ of generators of $\Gamma$ from the value at $s=0$ of a similar Poincar\'e series,
\item unpublished work by Anantharaman~\cite{Anantharaman} who is able to recover the Euler characteristic of a metric graph from the behaviour at $s=0$ of similar zeta functions
and Poincaré series as in the present paper.
\end{itemize}


\subsection*{Organization of the paper}
Preliminary background on combinatorial Laplacians and $L^2$-invariants is gathered in \S\ref{sec:prelim}, where in particular \eqref{eq:deltan+2} is proved.
Then in \S\ref{sec:fried} we prove \cref{theo:mainfried} with \cref{corol:2_and_4_mf} and \cref{theo:mainL2}, and in \S\ref{sec:link} we prove  \cref{theo:mainlink}.

\subsection*{Acknowledgments}
We thank Jean Raimbault for several interesting comments on a preliminary version of our results. N.V.D would like to thank Jean Yves Welschinger for some interesting discussion on our results.
L.B.~and T.S.~were partially funded by the Research Training Group 2491
``Fourier Analysis and Spectral Theory'', University of Göttingen. Y. C. is
supported by the Herchel Smith Postdoctoral Fellowship Fund. N.V.D is
supported by the Institut Universitaire de France.

We thank three referees for carefully reading the first version of the paper
and making many helpful remarks which significantly improved the presentation
of the paper.

\section{Preliminaries}
\label{sec:prelim}
In \S\ref{sec:lapl} we develop the combinatorial setting that will be used throughout the paper, and in \S\ref{subsec:L2} we collect the necessary notions on $L^2$-invariants.
\subsection{Combinatorial Laplacian}
\label{sec:lapl}
We start with a given triangulation $\mathscr T$ of an $n$-dimensional
manifold $M$. For us, part of the data of the triangulation is an orientation
of all its simplices.

We fix the convention that the orientation of a simplex $\sigma$ provides an
induced orientation for each simplex of its boundary by the \emph{first vector
  pointing outward}. More precisely: at any point $p$ of $\tau \in \partial
\sigma$, a basis $ b$ of $T_p\tau$ is positive if and only if the basis
$n_p \oplus  b$ yield a positive basis of $T_p\sigma$, where $n_p$ is a
normal vector to $\tau$ in $\sigma$ pointing outward.

More combinatorially, an orientation of a simplex is given by the class of an ordering of
its vertices, two such orderings being equivalent if one can be obtained from
the other by an even permutation. The negative orientation of a given one is
represented by the other class of orderings of the vertices. The orientation
induced on the face $\tau$ of
an oriented simplex $\sigma$ obtained by leaving out vertex number $k+1$ is
defined to be  $(-1)^k$ times the orientation
represented by the restriction of the ordering of the simplices of $\sigma$ to $\tau$.

The boundary of a simplex $\sigma$ is given by
\begin{equation}
\label{eq:boundary}
\partial \sigma = \sum_{\tau \text{ hyperface of } \sigma} \varepsilon(\tau) \tau, 
\end{equation}
where $\varepsilon(\tau) = \pm 1$ according to whether the orientation of $\tau$ coincides or not with the orientation induced by $\sigma$.

We denote by $C_k(\mathscr T)_\ZZ$ the $\ZZ$-module generated by the
$k$-dimensional simplices of $\mathscr T$. Complexifying, this yields the
simplicial chain complex 
$$
0 \overset{\partial}{\longrightarrow} C_n(\mathscr T) \overset{\partial}{\longrightarrow} \cdots \overset{\partial}{\longrightarrow} C_0(\mathscr T) \overset{\partial}{\longrightarrow} 0
$$
whose homology is of course the homology of the manifold $M$ (with complex
coefficients).

\begin{definition}
\label{def:compa}
For $k>0$, a pair of $k$-simplices is said to be \textit{admissible} if they share a
common $(k-1)$-face, and do not bound  {the} same $(k+1)$-simplex. Two
$k$-simplices forming an admissible pair have a \emph{compatible orientation}
if they induce opposite orientations on their common $(k-1)$-hyperface.
\end{definition}

The dual complex $\mathscr T^\vee$ of the triangulation $\mathscr T$ is the cell complex constructed as follows (see \cite[Chapter VI, Section 6]{Bredon} or \cite{STbook}). Take  $\mathscr T'$ the barycentric subdivision of $\mathscr T$. The closed star in  $\mathscr T^\vee$ of any vertex $p$ of $\mathscr T^{(0)}$ is an $n$-cell in $\mathscr T'$, denoted by $p^\vee$. If non-empty, i.e.~if $p$ and $q$ are adjacent, the intersection of $p^\vee$ and $q^\vee$ is an $(n-1)$-cell $\langle p,q\rangle^\vee$, which is dual to the edge $\langle p,q\rangle$ of $\mathscr T^{(1)}$, in the sense that it intersects this edge once positively, and does not intersect any other. Similarly, the intersection of 3 $n$-cells $p^\vee, q^\vee$ and $r^\vee$, if not empty, is a $(n-2)$-cell $\langle p,q,r \rangle^\vee$, dual to the 2-simplex $\langle p,q,r \rangle$.
This defines a Hodge star map
$$
\star \colon C_i(\mathscr T) \to C_{n-i}(\mathscr T^\vee),
$$
where the orientation of $\star \sigma$ is chosen so that at the intersection of $\sigma$ with $\star \sigma$, 
the orientation of $M$ induced by $\sigma$ followed by $\star \sigma$ is positive.
\begin{figure}[h]
\caption{
\label{fig:dual}
A $2$-dimensional triangulation $\mathscr T$ (in black), together with a dual {polyhedral} decomposition $\mathscr T^\vee$ (in red).
}
\vspace{0.2cm}
\includegraphics[scale=0.45]{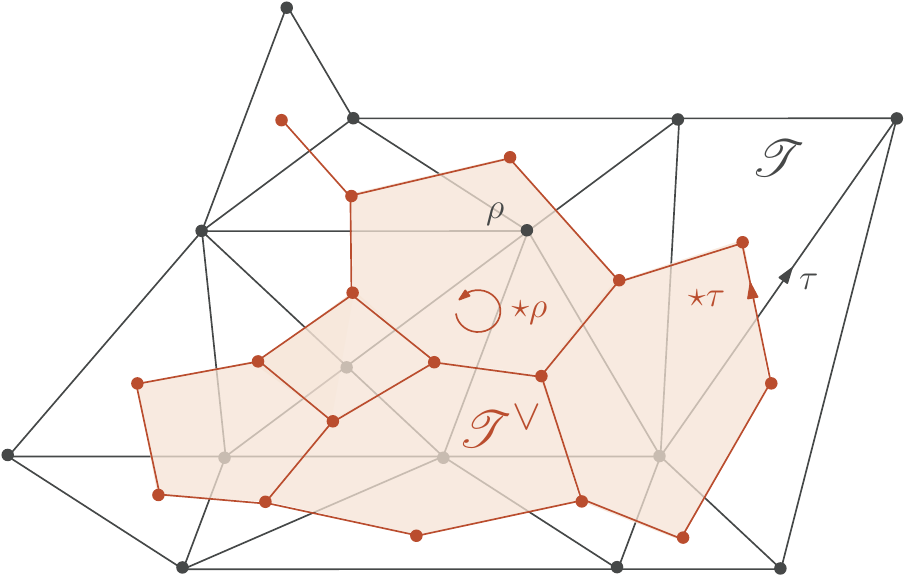}
\end{figure}
Since $\mathscr T$ is the dual complex of $\mathscr T^\vee$, we have a map $\star : C_\bullet(\mathscr T^\vee) \to C_\bullet(\mathscr T)$, and our choice of orientation yields
\begin{equation}\label{eq:involution}
\star^2 = (-1)^{k(n-k)} \mathrm{Id} \quad \text{on} \quad C_k(\mathscr T)
\end{equation}
for each $k = 0, \dots, n$.

Note that the dual complex is not a simplicial complex, in general, and not
even a particularly nice polyhedral complex. Fortunately, this is of no
relevance for our considerations where the specific computations are all
carried out on $\mathscr T$.

The family of oriented $k$-dimensional simplices of $\mathscr T$ defines a
basis $\beta_k$ of the chain complex $C_k(\mathscr T)$ for all $k$. We define a scalar product $\langle \cdot, \cdot \rangle$ on $C_k(\mathscr T)$ by declaring that this family is an orthonormal basis.
We let $\partial^\star : C_\bullet(\mathscr T) \to C_{\bullet + 1}(\mathscr T)$ be the adjoint operator of $\partial$ with respect to $\langle \cdot, \cdot \rangle$, which is defined by
$$
\langle \partial \sigma, \tau \rangle = \langle \sigma, \partial^\star \tau \rangle
$$
for every $\sigma \in C_k(\mathscr T)$ and $\tau \in C_{k-1}(\mathscr T)$. 

Note that there is an intersection product 
$$\cap \colon  C_k(\mathscr T) \times C_{n-k}(\mathscr T^\vee)$$
obtained by extending linearly the relation $\sigma \cap \tau^\vee = 1$ if $\tau=\sigma$ and $\sigma \cap \tau^\vee =0$ otherwise, for $\sigma, \tau$ two $k$-simplices of the basis $ \beta_k$. Clearly $\sigma\cap \tau^\vee = \langle \sigma, \tau\rangle$.
We obtain
\begin{lemma}
\begin{equation}\label{eq:partialstar}
\partial^\star = (-1)^{n(k + 1)} \star \partial \star \quad \text{on} \quad C_k(\mathscr T).
\end{equation}
\end{lemma}

\begin{proof}
Let $\tau \in C_k(\mathscr T)$. Then for all $\sigma \in C_{k+1}(\mathscr T)$, one has
$$\langle \sigma, \partial^\star \tau\rangle = \langle \partial \sigma, \tau\rangle = \partial \sigma \cap \star \tau.$$
Now we use the relations (\cite[Chapter 10, \S 69, equations (2) and (5)]{STbook})
$$\tau_1 \cap \tau_2 = (-1)^{k(n-k)} \tau_2 \cap \tau_1 \quad \text{and} \quad \tau_1 \cap \partial \tau_3 = (-1)^k \partial \tau_1 \cap \tau_3$$
for $\tau_1 \in C_k(\mathscr T), \tau_2 \in C_{n-k}(\mathscr T^\vee)$ and $\tau_3 \in C_{n-k+1}(\mathscr T^\vee)$. We obtain 
$$
\langle \sigma, \partial^\star \tau\rangle =\partial \sigma \cap \star \tau= (-1)^{k(n-k)} (\star \tau) \cap \partial \sigma = (-1)^{n-k}(-1)^{k(n-k)} \partial (\star \tau) \cap \sigma,
$$
which reads
$$
\begin{aligned}
&(-1)^{n-k}(-1)^{k(n-k)} (-1)^{(n-k-1)(k+1)} \sigma \cap \partial (\star\tau)\\ 
&\qquad \qquad = (-1)^{n-k}(-1)^{k(n-k)} (-1)^{(n-k-1)(k+1)}(-1)^{(n-k-1)(k+1)} \sigma \cap \star^2 \partial (\star\tau)\\
&\qquad \qquad=(-1)^{(k+1)(n-k)} \sigma \cap \star (\star \partial \star \tau) \\
&\qquad \qquad=(-1)^{n(k+1)} \langle \sigma, \star \partial \star \tau \rangle
\end{aligned}
$$
and the result is proved.
\end{proof}

\begin{definition}
The \emph{combinatorial Laplacian} is the operator
$$\Delta = \partial \partial^\star + \partial^\star \partial \colon C_\bullet(\mathscr T) \to C_\bullet(\mathscr T).$$
\end{definition}

The results we will prove in this paper all rely on the following proposition:
\begin{proposition}
\label{prop:Lapl}
Assume that $n>1$. The combinatorial Laplacian satisfies
\begin{equation}
  \label{eq:transfer_eq}
  \Delta_{n-1} = (n+2) \Id - T
\end{equation}
where $T$ 
is the
\emph{transfer signed random walk operator} defined in terms of the basis
$ \beta_{n-1}$ by
\begin{equation*}
  T(\tau) = \sum_{\substack{\sigma\in \bs\beta_{n-1} \\(\sigma,\tau)\text{ admissible}}}
  \varepsilon_{\tau, \sigma} \sigma,\qquad\tau\in  \beta_{n-1},
\end{equation*}
with $\varepsilon_{\tau,\sigma}=1$ if $\sigma$ and $\tau$ have compatible
orientations and $\varepsilon_{\tau,\sigma}=-1$ otherwise.
\end{proposition}

\begin{proof}
Given $\sigma_1, \sigma_2$ two simplices in the basis  $ \beta_{n-1}$, we compute 
$$\langle \Delta \sigma_1, \sigma_2\rangle = \langle \partial^\star \sigma_1, \partial^\star \sigma_2\rangle + \langle \partial \sigma_1, \partial \sigma_2\rangle.$$
Note that for $j = 1,2$, there are exactly two $n$-dimensional simplices
$\tau^+_j, \tau^-_j$ containing $\sigma_j$ in their boundary and signs
$\varepsilon_j^+,\varepsilon_j^-\in \{-1,1\}$ such that 
$$
\partial^\star \sigma_j = \varepsilon_j^+\tau^+_j + \varepsilon_j^-\tau^-_j.
$$
We distinguish four cases:
\begin{enumerate}
\item If $\sigma_1 = \sigma_2$, then $\tau_1^\pm = \tau_2^\pm$, $\varepsilon_1^{\pm}=\varepsilon_2^\pm$ and thus
  \begin{equation*}
    \langle \partial^\star \sigma_1, \partial^\star \sigma_2\rangle = \langle
    \varepsilon_1^+\tau^+_1 + \varepsilon_1^- \tau^-_1, \varepsilon_1^+\tau^+_1 + \varepsilon_1^-\tau^-_1 \rangle = 2.
  \end{equation*}
  On the other hand, $\partial \sigma_1$ consists of $n$ simplices of dimension $(n-2)$, hence 
$\langle \partial \sigma_1, \partial \sigma_2\rangle = n$ and we conclude
$$
\langle \Delta \sigma_1, \sigma_2 \rangle = n+2.
$$
\item
If $\sigma_1 \neq \sigma_2$ lie in the boundary of a common $n$-dimensional
simplex~$\tau$, up to replacing $\tau$ by $-\tau$ and exchanging the roles of $\tau_j^+$ and $\tau_j^-$, one can assume 
$$\tau = \tau_1^+ = \varepsilon \tau_2^+,$$
for some $\varepsilon \in \pm1$. Equivalently, the simplicial boundary of
$\tau$ contains $\sigma_1$ with sign $1$ and $\sigma_2$ with sign $\varepsilon$,
\begin{equation*}
  \langle \partial \tau,\sigma_1\rangle = 1;\quad \langle
  \partial\tau,\sigma_2\rangle = \varepsilon.
\end{equation*}
In particular we get
$$\langle \partial^\star \sigma_1, \partial^\star\sigma_2\rangle = \varepsilon.$$
As $n>1$ and we deal with a triangulation in this situation there is exactly one $(n-2)$-dimensional
simplex $\nu$ appearing in both $\partial \sigma_1$ and $\partial
\sigma_2$. We now claim that
$$
\langle \partial \sigma_1, \partial \sigma_2\rangle = -\varepsilon.
$$
To prove this, consider the simplicial chain
$0=\partial^2 \tau$. In the basis $\beta_{n-1}$ we have
\begin{equation*}
  \partial\tau = \sigma_1 + \varepsilon\sigma_2 + \sum_j \varepsilon_j\sigma_j
\end{equation*}
where the $\sigma_j$ are the faces of $\tau$ different of
$\sigma_1,\sigma_2$. Then $\langle\partial \sigma_j,\nu\rangle=0$ for those
$\sigma_j$ and
\begin{equation*}
 0 =\langle\partial^2\tau,\nu\rangle= \langle\partial\sigma_1+\varepsilon\partial\sigma_2,\nu\rangle
\end{equation*}
As $\nu$ is the only common term in the simplicial basis expansion of
$\partial\sigma_1$ and $\partial\sigma_2$ this implies indeed
$\langle\partial\sigma_1,\partial\sigma_2\rangle =-\varepsilon$.

Combining these computations we get
\begin{equation*}
  \langle \Delta \sigma_1, \sigma_2 \rangle
  =\langle\partial^*\sigma_1,\partial^*\sigma_2\rangle + \langle\partial\sigma_1,\partial\sigma_2\rangle = 0.
\end{equation*}
\item
If $\sigma_1$ and $\sigma_2$ share a common $(n-2)$-face, but are not in the boundary of a common $n$-dimensional simplex, then 
$\langle \partial^\star \sigma_1, \partial^\star \sigma_2 \rangle = 0$, and
$\langle \partial \sigma_1, \partial \sigma_2\rangle = \pm 1$. Moreover, if
$\sigma_1$ and $\sigma_2$ have compatible orientations, then by definition of
this notion $\langle \partial \sigma_1, \partial \sigma_2\rangle = -1$ and $\langle \partial \sigma_1, \partial \sigma_2\rangle =+1$ otherwise.
\item 
Finally, if $\sigma_1$ and $\sigma_2$ do not share a common face, then both $\langle \partial^\star \sigma_1, \partial^\star \sigma_2 \rangle$ and $\langle \partial \sigma_1, \partial \sigma_2 \rangle$ vanish.
\end{enumerate}
The proof is complete.
\end{proof}

\begin{remark}\label{rem:generalize_trian}
 \cref{prop:Lapl} is the only property of the cell decomposition and resulting
 cellular Laplacian which is used in the following (together with Poincar\'e
 duality to identify the first and the codimension $1$ Betti numbers).

 We observe that the proof
 of \cref{prop:Lapl}
 works with slightly weaker conditions on the triangulation of our
 $n$-dimensional space: it suffices to
 have a decomposition into simplices such that each codimension $1$ simplex is
 in the boundary of exactly two $n$-dimensional simplices and such that each
 top dimensional simplex has exactly $(n+1)n/2$ distinct faces of codimension
 $2$, so that two codimension $1$ simplices intersect in at most one simplex
 of codimension $2$.

 Therefore, we can work with polyhedral decompositions into simplices which
 are not triangulations, as long as they satisfy these conditions.

 An example is illustrated in \cref{fig:torus}.
\end{remark}
\subsection{$L^2$-invariants}
\label{subsec:L2}
In this subsection, we consider the situation where the triangulation
defined in the previous subsection admits a free co-compact simplicial action of a group $\pi$. In this setting, we will denote by $\wh{\mathscr{T}}$ the triangulation, and by $\mathscr T = \wh{\mathscr{T}} / \pi$ the quotient, which we assume to be a triangulation of a compact manifold $M$ of dimension $n$. In other words, $\pi$ is a quotient of $\pi_1(M)$ and $\wh{\mathscr{T}}$ is a triangulation of the (in general non-compact) corresponding covering $\wh{M}$ of $M$.

The action of $\pi$ on $\wh{\mathscr{T}}$ gives the groups
$C_k(\wh{\mathscr{T}})_\ZZ$ the structure of free $\ZZ[\pi]$-modules of rank
$|\mathscr T^{(k)}| = \dim C_k(\mathscr T)$. The combinatorial Laplacian
$\Delta_k$ described in \cref{sec:lapl} acts on $C_k(\wh{\mathscr{T}})$ as a
$\pi$-equivariant operator, in the sense that
$$
\Delta_k (\alpha \cdot \hat \sigma) = \alpha \cdot \Delta_k \hat \sigma, \quad
\forall \hat \sigma \in C_k(\wh{\mathscr T}), \quad \forall\alpha \in \pi,
$$
and so does the transfer operator $T$ defined in \cref{prop:Lapl}. 
Note that any choice of lifts $\{\widehat{\sigma}_i^k\}$ of a finite basis $\{\sigma_i^k\}$ of $C_k({\mathscr T})$ yields a finite basis of $C_k(\wh{\mathscr T})_\ZZ$ as a free $\ZZ[\pi]$-module.

Complexifying, we obtain $C_k(\wh{\mathscr T})$ as a free $\mathbb C[\pi]$-module, yielding a complex vector space, generated by elements of the form
$$
\alpha \cdot \hat \sigma_i^k, \quad \alpha \in \pi, \quad i = 1, \dots, |\mathscr T^{(k)}|.
$$
We endow this space with a scalar product $\langle \cdot, \cdot \rangle_{L^2}$, defined by
\begin{equation}\label{eq:scalprod}
\left\langle \sum_{\alpha \in \pi} \lambda_\alpha \, \alpha\cdot\hat{\sigma}_i^k, \, \sum_{\beta\in \pi} \mu_{\beta} \, \beta \cdot \hat \sigma_j^k \right\rangle_{L^2} =  \delta_{i,j}\sum_{\alpha \in \pi} \lambda_\alpha \overline\mu_{\alpha^{-1}}.
\end{equation}
We will denote by $C_k^{(2)}(\mathscr T, \pi)$ the Hilbert space given by the completion of $C_k(\wh{\mathscr T})$ with respect to the norm induced by $\langle \cdot, \cdot \rangle_{L^2}$.

Given a bounded $\pi$-equivariant operator $P \colon C_k^{(2)}(\mathscr T, \pi) \to C_k^{(2)}(\mathscr T, \pi)$, we define its \emph{von Neumann trace} $\tr_{\mathrm{vN}}P$ by 
\begin{equation}
\label{eq:L2tracedef}
\tr_{\mathrm{vN}}P = \sum_{i=1}^{|\mathscr T^{(k)}|} \langle P\hat{\sigma}_i^k, \hat{\sigma}_i^k\rangle_{L^2}.
\end{equation}
Given any $\pi$-invariant closed subspace $U \subset C_k^{(2)}(\mathscr T, \pi)$, we define its \emph{von Neumann dimension} by
\begin{equation}
\label{eq:vNdim}
\dim_{\mathrm{vN}}U  = \tr_{\mathrm{vN}}\Pi_U,
\end{equation}
where $\Pi_U$ is the orthogonal projection on $U$.

The operator $\Delta_k$  induces a bounded, self-adjoint $\pi$-equivariant operator
\begin{equation*}
  \Delta^{(2)}_k \colon C_k^{(2)}(\mathscr T, \pi) \to C_k^{(2)}(\mathscr T, \pi).
\end{equation*}
It has real, non-negative,  bounded spectrum. The spectral theorem for bounded self-adjoint operators yields a family of orthogonal projections
$$
\left(E_k(\lambda)\right)_{\lambda \in \RR}; \quad E_k(\lambda)=\chi_{[0,\lambda]}(\Delta^{(2)}_k)
$$
 called the spectral family of $\Delta^{(2)}_k$ (\cite[Definition
 1.68]{Lueck}); it has the property that \(\displaystyle \Delta_k^{(2)} = \int_\RR \lambda \, dE_k(\lambda)\). Then the $L^2$-spectral density function of $\Delta_k^{(2)}$ is the function
\begin{equation}
\label{eq:SDF}
D_k \colon \RR \to \RR_{\geqslant 0}, \quad  \lambda \mapsto \tr_{\mathrm{vN}}  E_k(\lambda). 
 \end{equation}
By definition, the \emph{$k$-th $L^2$-Betti number $b_k^{(2)}(M, \pi)$} is the von Neumann dimension of $\ker \Delta^{(2)}_k$, so that $b_k^{(2)}(M, \pi) = D_k(0)$.

Because $\Delta^{(2)}_k$ is bounded, for all $\lambda \geqslant \lVert \Delta^{(2)}_k\rVert$ the function $D_k$ is constant, equal to $D_k(\lambda)=|\mathscr T^{(k)}|$.

\begin{definition}
\label{def:detclass}
The Laplacian $\Delta_k^{(2)}$ is said to be \emph{of determinant class} if the Stieltjes integral
\begin{equation}
\label{eq:FKdef}
\int_{0^+}^\infty \log \lambda \, \dd D_k(\lambda) =
\lim_{\substack{\varepsilon \to 0\\ \varepsilon >0 }}
\int_{\varepsilon}^\infty \log \lambda \, \dd D_k(\lambda) \in \mathbb{R}
\end{equation}
exists as a real number, and in this case the \emph{Fuglede--Kadison determinant} of $\Delta_k^{(2)}$ is given by
$$
{\det}_{\mathrm{FK}}(\Delta_k^{(2)}) = \exp\int_{0^+}^\infty \log \lambda \,
\dd D_k(\lambda) \in (0,\infty).
$$
\end{definition}


\begin{remark}
  \strut
  \begin{itemize}
  \item The definition of Fuglede--Kadison determinant generalizes of course
    in the same way to any non-negative Hilbert $\vN\pi$-module endomorphism
    $A\colon U\to U$ (with $\dim_{\vN}(U)<\infty$), see \cite[Chapter 3]{Lueck}.
\item
The integrals on the right-hand side of \cref{eq:FKdef} are finite: the
boundedness of $\Delta^{(2)}_k$ implies that they are indeed integrals on
$[\varepsilon, \lVert \Delta^{(2)}_k\rVert]$.
\item By definition, if $\log(A)$ is defined by spectral calculus, i.e.~if
  $\sigma(A)\subset (0,\infty),$ then
  \begin{equation*}
    \log\detFK(A) = \tr_{\vN}(\log(A)) \quad\implies \quad\detFK(A)= \exp(\tr_{\vN}(\log(A))).
  \end{equation*}
\item In case the group $\pi$ is finite, i.e.~if
  we deal with a finite cover $\widehat M$ of $M$, then the FK-determinant
  is nothing but the 
  usual determinant of the positive matrix $\Delta^{(2)}_k$ restricted to the
  orthogonal of its kernel (i.e.~the product of the non-zero eigenvalues), but
  then raised to the power $1/|\pi|$.
\end{itemize}
\end{remark}

Later, we will use the following basic properties of the Fuglede--Kadison
determinant.
By \cite[Lemma 3.15
    (7)]{Lueck} we have 
  \begin{equation}\label{item:add}
    {\det}_{\mathrm{FK}}(A\oplus B) =
    {\det}_{\mathrm{FK}}(A)\cdot {\det}_{\mathrm{FK}}(B) .
  \end{equation}
 If $\lambda\in (0,\infty)$
    and $A\colon U\to U$ is an injective endomorphism of the Hilbert
    $\mathcal{N}\left(\pi\right)$-module $U$ of finite von Neumann dimension then
    \begin{equation}\label{item:mult} 
      {\det}_{\mathrm{FK}}(\lambda A) =
      \lambda^{\dim_{\vN}(U)}{\det}_{\mathrm{FK}}(A)
    \end{equation}
    by \cite[Theorem 3.14 (1) and (6)]{Lueck}.

    We also need the following continuity result for the Fuglede--Kadison
determinant, which is not 
explicitly treated in \cite[Chapter 3]{Lueck}.
\begin{lemma}\label{lem:FK}
  Let $A\colon U\to U$ be a positive injective Hilbert $\vN\pi$-module endomorphism where
  $\dim_{\vN}(U)<\infty$.
  Then $f(z)=z\mapsto \detFK(A+z\Id)$ is an analytic function defined on
  $(0,\infty)$. It extends continuously at $0$ with
  \begin{equation}\label{item:conv}
    \lim_{z\to 0^+} f(z) = \detFK(A).
  \end{equation}
  If $A$ is not of determinant class and hence $\detFK(A)=0$, then $f(z)$
  converges to $0$ for $z\to 0$ slower than any positive power of $z$. More
  precisely, for
  every $C>0$ and $\alpha>0$, there exists $\varepsilon>0$ such that
  \begin{equation*}
    f(z) > C z^\alpha,\qquad  z\in(0,\varepsilon).
  \end{equation*}
\end{lemma}
\begin{proof}
  Let $F(t)=\tr_{\vN}(\chi_{[0,t]}(A))$ be the spectral density function of
  $A$ (using measurable functional calculus with the characteristic function
  $\chi_{[0,t]}$ of the interval $[0,t]\subset\RR$).

  Then by the proof of \cite[Lemma 3.15 (5)]{Lueck} we have
  \begin{equation*}
    \detFK(A+z\Id) = \int_{0^+}^{\norm{A}} \log(\lambda+z)\,dF(\lambda).
\end{equation*}
Around each $z_0>0$, the function $z\mapsto \log(\lambda+z)$ has an absolutely
convergent power series expansion, with convergence uniform in $\lambda\in
[0,\norm{A}]$. Therefore,  
$\detFK(A+z\Id)$ is also analytic on $(0,\infty)$ by the continuity of the Stieltjes integral.

The continuity at $0$ is established in \cite[Lemma 3.15 (5)]{Lueck} with
$\lim_{\varepsilon\to 0^+}    {\det}_{\mathrm{FK}}(A+\varepsilon\Id)
={\det}_{\mathrm{FK}}(A)$.
  
Finally, to control the behaviour of $f$ near $0$, choose $C>0$ and
$\alpha>0$. On $(0,C^{-1/\alpha})$ we have $C\leqslant z^{-\alpha}$ and hence
$z^{\alpha}\geqslant C z^{2\alpha}$. Therefore, it suffices to find $\varepsilon>0$
such that $f(z)>z^\alpha$ on $(0,\varepsilon)$ to conclude the proof.

Now, we use that the spectral density function $F$ is right continuous and
increasing. Set ${\nu}=\frac{\alpha}{2\dim_{\vN}(U)}$. As $F(0)=0$ by the
injectivity of $A$, we can choose 
$\varepsilon>0$ such that $F(z^{\nu})<\frac{\alpha}{2}$ whenever $z<\varepsilon$.

With this $\varepsilon$ which we choose smaller than $1$ and for $0<z<\varepsilon<1$ we then have
\begin{equation*}
  \begin{split}
    \int_{0}^{\norm{A}} \log(\lambda+z)\,dF(\lambda) 
& = \int_0^{z^{\nu}}\log(\lambda+z)\,dF(\lambda)+\int_{z^{\nu}}^{\norm{A}}
  \log(\lambda+z)\,dF(\lambda)\\
  & \geqslant \int_0^{z^{\nu}}\log(z)\,dF(\lambda) +\int_{z^{\nu}}^{\norm{A}}
    \log(z^{\nu}+z)\,dF(\lambda)\\
&\geqslant F(z^{\nu})\log(z) + (\underbrace{F(\norm{A})}_{=\dim_{\vN}(U)}-F(z^{\nu})) \log(z^{\nu})\\
    & {\geqslant } \frac{\alpha}{2} \log(z) +
      \dim_{\vN}(U){\nu}\log(z)\\
    &{=} \alpha \log(z),
  \end{split}
\end{equation*}
where we used $\log(z)<0$ for $z < 1$ in the last inequality and $\nu = \alpha / (2 \dim_{\mathrm{vN}}(U))$ in the last equality.
Consequently, for $0<z<\varepsilon$ as chosen above,
\begin{equation*}
  \begin{split}
    \detFK(A+z\Id) & = \exp\left(\int_0^{\norm{A}} \log(\lambda+z)\,dF(\lambda)
                     \right)\\
    & \geqslant \exp(\alpha\log(z)) = z^\alpha.
  \end{split}
\end{equation*}
This completes the proof.
\end{proof}

\section{First Betti number and combinatorial Ruelle zeta function}
\label{sec:fried}
\subsection{The compact case}
The purpose of this paragraph is to prove \cref{theo:mainfried}.

\medbreak

Let us fix a compact oriented manifold $M$ of dimension $n>1$ with a triangulation~$\mathscr T$. 

\begin{definition}\label{def:reversing_ind}
For a combinatorial closed geodesic $\gamma$, we denote by $n_\gamma$ the
\textit{reversing number} of $\gamma$, that is, the number of adjacent pairs
with non-compatible orientation (as defined in Definition \ref{def:compa})
appearing in $\gamma$. Its parity 
$$
\varepsilon_\gamma = (-1)^{n_\gamma}
$$
is called the \textit{reversing index} of $\gamma$.
\end{definition}

For an illustration of this concept, see \cref{fig:sign}.

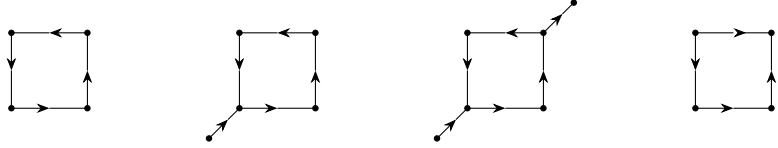
\begin{figure}[h]
\caption{\label{fig:sign} Some closed primitive unmarked geodesics $\gamma_i, \, i=1 \ldots 4,$ from left to right. The extra edges are browsed twice (back-and-forth). For the first one on the left, $n_{\gamma_1} = 0$, then $n_{\gamma_2}=1$ and $n_{\gamma_3} = n_{\gamma_4} = 2$.}
\begin{tikzpicture} 
\draw[fill=black] (-3,0) circle (1pt);
\draw[fill=black] (-2,0) circle (1pt);
\draw[fill=black] (-2,1) circle (1pt);
\draw[fill=black] (-3,1) circle (1pt);
\draw[-Stealth] (-3,0) -- (-2.5,0);
\draw[-Stealth] (-2,0) -- (-2,0.5);
\draw[-Stealth] (-2,1) -- (-2.5,1);
\draw[-Stealth] (-3,1) -- (-3,0.5);
\draw (-2.5,0) -- (-2,0);
\draw (-2,0.5) -- (-2,1);
\draw (-2.5,1) -- (-3,1);
\draw (-3,0.5) -- (-3,0);

\draw[fill=black] (0,0) circle (1pt);
\draw[fill=black] (1,0) circle (1pt);
\draw[fill=black] (1,1) circle (1pt);
\draw[fill=black] (0,1) circle (1pt);
\draw[fill=black] (-0.4,-0.4) circle (1pt);
\draw[-Stealth] (0,0) -- (0.5,0);
\draw[-Stealth] (1,0) -- (1,0.5);
\draw[-Stealth] (1,1) -- (0.5,1);
\draw[-Stealth] (0,1) -- (0,0.5);
\draw[-Stealth] (-0.4,-0.4) -- (-0.15, -0.15);
\draw (0.5,0) -- (1,0);
\draw (1,0.5) -- (1,1);
\draw (0.5,1) -- (0,1);
\draw (0,0.5) -- (0,0);
\draw (-0.15, -0.15) -- (0,0);

\draw[fill=black] (3,0) circle (1pt);
\draw[fill=black] (4,0) circle (1pt);
\draw[fill=black] (4,1) circle (1pt);
\draw[fill=black] (3,1) circle (1pt);
\draw[fill=black] (2.6,-0.4) circle (1pt);
\draw[fill=black] (4.4,1.4) circle (1pt);

\draw[-Stealth] (3,0) -- (3.5,0);
\draw[-Stealth] (4,0) -- (4,0.5);
\draw[-Stealth] (4,1) -- (3.5,1);
\draw[-Stealth] (3,1) -- (3,0.5);
\draw[-Stealth] (2.6,-0.4) -- (2.85, -0.15);
\draw[-Stealth] (4,1) -- (4.25, 1.25);
\draw (3.5,0) -- (4,0);
\draw (4,0.5) -- (4,1);
\draw (3.5,1) -- (3,1);
\draw (3,0.5) -- (3,0);
\draw (2.85, -0.15) -- (3,0);
\draw (4.25, 1.25) -- (4.4,1.4);

\draw[fill=black] (6,0) circle (1pt);
\draw[fill=black] (7,0) circle (1pt);
\draw[fill=black] (7,1) circle (1pt);
\draw[fill=black] (6,1) circle (1pt);

\draw[-Stealth] (6,0) -- (6.5,0);
\draw[-Stealth] (7,0) -- (7,0.5);
\draw[arrows = {-Stealth[reversed]}] (7,1) -- (6.5,1);
\draw[-Stealth] (6,1) -- (6,0.5);
\draw (6.5,0) -- (7,0);
\draw (7,0.5) -- (7,1);
\draw (6.5,1) -- (6,1);
\draw (6,0.5) -- (6,0);
\end{tikzpicture}

\end{figure}

\begin{definition}
  For any combinatorial closed geodesic $\gamma$, we denote by $\gamma^\sharp$
  the unique primitive combinatorial closed geodesic so that $\gamma$ is a power of $\gamma^\sharp$. Recall that $|\gamma^\sharp|$ denotes the length of $\gamma^\sharp$.
\end{definition}

\begin{lemma}
\label{prop:trace}
For each $k = 1, 2, \dots$, it holds
\begin{equation}
\label{eq:transfer}
\tr T^k = 
\sum_{|\gamma| = k} \varepsilon_\gamma |\gamma^\sharp|,
\end{equation}
where the sum runs over all combinatorial closed geodesics (not necessary primitive).
\end{lemma}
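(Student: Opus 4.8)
The plan is to expand $\tr T^k$ directly in the orthonormal basis $\beta_{n-1}$ of $(n-1)$-polyhedra and to match the surviving terms with closed geodesic paths. Writing $T_{\sigma\tau} = \langle T\tau,\sigma\rangle$, \cref{prop:Lapl} gives $T_{\sigma\tau} = \varepsilon_{\tau,\sigma}$ whenever $(\sigma,\tau)$ is admissible and $T_{\sigma\tau}=0$ otherwise (in particular $T$ has zero diagonal, since no pair $(\sigma,\sigma)$ is admissible). Hence
\[
\tr T^k \;=\; \sum_{\sigma_1,\dots,\sigma_k\in\beta_{n-1}} T_{\sigma_1\sigma_2}T_{\sigma_2\sigma_3}\cdots T_{\sigma_{k-1}\sigma_k}T_{\sigma_k\sigma_1},
\]
and a term is nonzero precisely when each cyclically indexed consecutive pair $(\sigma_i,\sigma_{i+1})$ (indices mod $k$) is admissible, i.e. precisely when $(\sigma_1,\dots,\sigma_k)$ is a closed geodesic path of length $k$. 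For such a path the term equals $\prod_{i\bmod k}\varepsilon_{\sigma_{i+1},\sigma_i}$; since the relation ``having compatible orientation'' is symmetric, $\varepsilon_{\sigma_{i+1},\sigma_i}=\varepsilon_{\sigma_i,\sigma_{i+1}}$, so this product is unambiguous and equals $(-1)^{n_\gamma}=\varepsilon_\gamma$, where $\gamma$ is the closed geodesic represented by the path (the number $n_\gamma$ of non-compatible consecutive pairs is manifestly invariant under cyclic permutation of the sequence, hence depends only on $\gamma$). Thus $\tr T^k = \sum_c \varepsilon_{\gamma(c)}$, where $c$ ranges over all closed geodesic \emph{paths} of length $k$ and $\gamma(c)$ is the closed geodesic it represents.

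It then remains to pass from based paths to closed geodesics. Fix a closed geodesic $\gamma$ with $|\gamma|=k$ and set $d=|\gamma^\sharp|$, so that $d\mid k$ and any representative path of $\gamma$ is the $(k/d)$-fold concatenation of a representative path $p$ of $\gamma^\sharp$. Because $\gamma^\sharp$ is primitive, $p$ is aperiodic under cyclic rotation, and a standard necklace-counting argument shows that the $(k/d)$-fold concatenation of an aperiodic word of length $d$ has exactly $d$ distinct cyclic rotations; equivalently, the number of closed geodesic paths representing $\gamma$ equals $d=|\gamma^\sharp|$. Grouping the finite sum $\sum_c\varepsilon_{\gamma(c)}$ by the value of $\gamma(c)$ and using that $\varepsilon_{\gamma(c)}$ is constant on each group yields $\tr T^k = \sum_{|\gamma|=k}|\gamma^\sharp|\,\varepsilon_\gamma$, which is \eqref{eq:transfer}.

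The only genuinely nontrivial point is the bookkeeping in the second step: one must verify that $\varepsilon_\gamma$ really is a function of the unbased closed geodesic (cyclic invariance of $n_\gamma$, used above), and — if one wants the statement to be insensitive to the orientations chosen for the elements of $\beta_{n-1}$ — that $\prod_{i}\varepsilon_{\sigma_i,\sigma_{i+1}}$ around the cycle does not change when one flips the chosen orientation of a single $\sigma$; this holds because flipping $\sigma$ flips the compatibility of both slots adjacent to each occurrence of $\sigma$, altering $n_\gamma$ by an even integer. With the ``$d$ distinct rotations of a power of an aperiodic word'' count in hand, the remainder is a formal rearrangement of a finite sum, and I expect no further obstacle.
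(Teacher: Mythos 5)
Your proof is correct and follows essentially the same route as the paper's: expand $\tr T^k$ over basis elements, identify the nonvanishing terms with closed geodesic paths contributing $\varepsilon_\gamma$, and count that each closed geodesic of length $k$ is represented by exactly $|\gamma^\sharp|$ based paths. You simply make explicit two points the paper leaves implicit (cyclic/orientation invariance of $\varepsilon_\gamma$ and the aperiodic-word rotation count), which is a welcome refinement rather than a different argument.
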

\begin{proof}
By the rules for the product of matrices, for $\sigma \in  \beta_{n-1}$ the
diagonal coefficient $(T^k)_{\sigma,\sigma}$ of the k-th power of $T$ at
$\sigma$ is given as
$$
(T^k)_{\sigma, \sigma}= \sum_{\sigma_1, \ldots, \sigma_{k-1}} T_{\sigma, \sigma_1}T_{\sigma_1, \sigma_2} \ldots T_{\sigma_{k-1}, \sigma}
$$
where a priori the $\sigma_j$ run through the basis $\beta_{n-1}$. But of
course, we can restrict to those tuples where none of the terms
$T_{\sigma_j,\sigma_{j+1}}$ is zero. By \cref{prop:Lapl}, this means
  that the sum is over all tuples forming an admissible path
  $(\sigma,\sigma_1,\dots,\sigma_{k-1},\sigma)$. Moreover, by 
  \cref{prop:Lapl} and \cref{def:reversing_ind} of the reversing
  index $\varepsilon_\gamma$, we have 
$$
T_{\sigma, \sigma_1}T_{\sigma_1, \sigma_2} \ldots T_{\sigma_{k-1}, \sigma} = \varepsilon_\gamma
$$
whenever $\gamma$ is represented by the combinatorial closed geodesic
$(\sigma, \sigma_1, \dots, \sigma_{k-1})$. Finally, each combinatorial closed geodesic
$\gamma$ will appear exactly $|\gamma^\sharp|$ times in $\tr T^k$, which
concludes the proof.
\end{proof}

With this trace formula in hand, let us prove \cref{theo:mainfried}, which we recall here. The transfer matrix $T$ is defined in \cref{prop:Lapl}.
\begin{theorem}\label{theo:fried}
Let $\rho(T)$ be the spectral radius of the transfer matrix $T$. Then for $|z|<\rho(T)^{-1}$ we have
\begin{equation}\label{eq:zetaprod}
\zeta_{\mathscr T}(z) = \prod_{\gamma \in \mathcal{P}}\left(1-\varepsilon_\gamma z^{|\gamma|}\right) = \det\left(\mathrm{Id} - z T\right).
\end{equation}
In particular, $\zeta_{\mathscr T}(z)$ extends to a polynomial function
defined on the whole complex plane $\CC$. 
Moreover, it has a zero of order $b_1(M)$ at $z=(n+2)^{-1}$.
\end{theorem}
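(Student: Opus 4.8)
The plan is to run the familiar Ruelle-zeta computation — turning the trace formula of \cref{prop:trace} into an Euler product via the identity $\det=\exp\tr\log$ — and then to read off the order of vanishing at $z=(N+2)^{-1}$ from the spectral identity $\Delta_{n-1}=(N+2)\Id-T$ of \cref{prop:Lapl}.

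\emph{Step 1: the determinant formula \eqref{eq:zetaprod}.} I would factor $\det(\Id-zT)=\prod_i(1-z\lambda_i)$ over the eigenvalues $\lambda_i$ of $T$ (with algebraic multiplicity); for $|z|<\rho(T)^{-1}$ each factor has $|z\lambda_i|<1$, so expanding the logarithm gives
\[
  -\log\det(\Id-zT)=\sum_{k\ge1}\frac{z^k}{k}\sum_i\lambda_i^k=\sum_{k\ge1}\frac{z^k}{k}\tr T^k,
\]
the series converging absolutely since $|\tr T^k|\le(\dim_\CC C_{n-1}(\mathscr T))\,\rho(T)^k$. Next I substitute \cref{prop:trace} and regroup closed geodesics by their primitive root: $(\delta,m)\mapsto\delta^m$ is a bijection of $\mathcal P\times\ZZ_{\ge1}$ onto the set of all closed geodesics, with $|\delta^m|=m|\delta|$, and the reversing index is multiplicative along powers, $\varepsilon_{\delta^m}=\varepsilon_\delta^m$ (a loop run $m$ times has $m$ times as many non-compatible consecutive pairs). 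Hence
\[
  \sum_{k\ge1}\frac{z^k}{k}\tr T^k=\sum_{\delta\in\mathcal P}\sum_{m\ge1}\frac{z^{m|\delta|}}{m|\delta|}\,\varepsilon_\delta^m\,|\delta|=\sum_{\delta\in\mathcal P}\sum_{m\ge1}\frac{(\varepsilon_\delta z^{|\delta|})^m}{m}=-\sum_{\delta\in\mathcal P}\log\bigl(1-\varepsilon_\delta z^{|\delta|}\bigr),
\]
and exponentiating yields \eqref{eq:zetaprod}. Here the rearrangement and the convergence of the Euler product follow from absolute convergence, valid for $|z|$ small because the number of primitive closed geodesics of length $\ell$ is bounded by $\ell^{-1}\tr A^\ell$, $A$ being the $0$–$1$ adjacency matrix of the admissibility relation, and so grows at most exponentially. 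Since $\det(\Id-zT)$ is visibly a polynomial in $z$ (of degree at most $|\mathscr T^{(n-1)}|=\dim_\CC C_{n-1}(\mathscr T)$, with constant term $1$), this shows $\zeta_{\mathscr T}$ extends to $\CC$ as that polynomial.

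\emph{Step 2: the order at $z_0=(N+2)^{-1}$.} From $\det(\Id-zT)=\prod_i(1-z\lambda_i)$, the order of vanishing at $z_0$ equals $\#\{i:\lambda_i=z_0^{-1}=N+2\}$. By \cref{prop:Lapl}, $T=(N+2)\Id-\Delta_{n-1}$, so $N+2$ is an eigenvalue of $T$ exactly when $0$ is an eigenvalue of $\Delta_{n-1}$, with the same multiplicity; and since $\Delta_{n-1}=\partial\partial^\star+\partial^\star\partial$ is self-adjoint for $\langle\cdot,\cdot\rangle$, hence diagonalizable, that multiplicity is simply $\dim_\CC\ker\Delta_{n-1}$. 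Combinatorial Hodge theory for the finite inner-product complex $(C_\bullet(\mathscr T)\otimes\CC,\partial)$ gives $\ker\Delta_{n-1}=\ker\partial\cap\ker\partial^\star\cong H_{n-1}(M;\CC)$ (via the orthogonal decomposition $C_{n-1}=\operatorname{im}\partial\oplus\ker\Delta_{n-1}\oplus\operatorname{im}\partial^\star$), so $\dim_\CC\ker\Delta_{n-1}=b_{n-1}(M)$. Finally Poincaré duality for the closed oriented $n$-manifold $M$, together with the universal coefficient theorem, gives $b_{n-1}(M)=b_1(M)$, which is the asserted order.

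\emph{On the difficulty.} With \cref{prop:Lapl} and \cref{prop:trace} available, the argument is essentially standard and I expect no serious obstacle. The one point that needs genuine care is the combinatorial bookkeeping in Step 1: verifying that the reversing index is multiplicative under powers, $\varepsilon_{\delta^m}=\varepsilon_\delta^m$, and that the factor $|\gamma^\sharp|$ appearing in \cref{prop:trace} cancels the $1/m$ produced by the logarithm, so that precisely the primitive lengths survive in the Euler product. It is also worth flagging that the passage from $b_{n-1}$ to $b_1$ is where orientability of $M$ enters.
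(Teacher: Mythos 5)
Your proposal is correct and follows essentially the same route as the paper: the $\det=\exp\tr\log$ expansion combined with the trace formula of \cref{prop:trace} and the regrouping of closed geodesics by their primitive root (with $\varepsilon_{\gamma^p}=\varepsilon_\gamma^p$, which the paper uses implicitly and you verify explicitly), followed by \cref{prop:Lapl}, finite-dimensional Hodge theory and Poincar\'e duality to identify the order of vanishing at $z=(N+2)^{-1}$ with $b_{n-1}(M)=b_1(M)$. The only cosmetic difference is that you read off the order by counting eigenvalues of the self-adjoint $T$ equal to $N+2$, whereas the paper factors $\det(\Id-zT)$ directly in terms of $\Delta_{n-1}$; these are the same argument.
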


\begin{proof}
Using that for a positive complex matrix $\log(\det(A))=\tr(\log(A))$ and
using the power series of $\log$, we have
$$
\begin{aligned}
\det(\mathrm{Id} - zT) &= \exp \left(- \sum_{k=1}^\infty \frac{z^k}{k} \tr T^k
\right)&= \exp\left( - \sum_{k = 1}^\infty \frac{z^k}{k} \sum_{|\gamma| = k} \varepsilon_\gamma |\gamma^\sharp|\right).
\end{aligned}
$$
This makes sense for $z\in\CC$ with $|z|<\rho(T)^{-1}$ because then $\lVert
zT\rVert <1$, hence  $\mathrm{Id}-zT$ is positive and the series converge absolutely. 

Now we have
\begin{equation*}
{\sum_{|\gamma| = k} \varepsilon_\gamma |\gamma^\sharp| = \sum_{\gamma \in \mathcal P} |\gamma|\sum_{p\in\mathbb{N}:~p\cdot|\gamma| = k}\varepsilon_\gamma^p},
\end{equation*}
hence one gets
$$
\begin{aligned}
\det(\mathrm{Id} - zT) &= \exp \left(- \sum_{\gamma \in \mathcal P} |\gamma| \sum_{p=1}^\infty\frac{z^{p|\gamma|}}{p|\gamma|} \varepsilon_\gamma^p\right) \\
&= \prod_{\gamma \in \mathcal P} \exp \left(- \sum_p \frac{\left(\varepsilon_\gamma z^{|\gamma|}\right)^p}{p}\right),
\end{aligned}
$$
which proves \cref{eq:zetaprod}. Finally, by \cref{prop:Lapl}, we have
\begin{equation*}
  \begin{split}
    \det\left(\mathrm{Id}-(z-\frac{1}{2+n})T\right) &= \det\left( \mathrm{Id}
                                                      -(z-\frac{1}{2+n})
                                                      ((n+2)\mathrm{Id}-\Delta)\right) \\
    &=  \det\left(\frac{1}{2+n}\Delta+ z (\Delta-(n+2))  \right)
  \end{split}
\end{equation*}
which vanishes at zero of order $\dim\ker(\Delta_{n-1})$, as we see by
diagonalizing $\Delta_{n-1}$.
Moreover, by a standard linear algebra/functional analysis result (sometimes
called finite 
dimensional Hodge theory, see~\cite[Appendix A]{nicolaescu2008reidemeister} for an exposition of this circle of ideas), we have
$$\ker(\partial) = \im(\partial)\oplus \ker(\Delta)\quad\implies \;\dim \ker \Delta_{n-1} = b_{n-1}(M).$$
Finally by Poincar\'e duality $b_{n-1}(M)=b_1(M)$ and we conclude that
$\det(\mathrm{Id} - zT)$ vanishes of order $b_1(M)$ at $z = (n+2)^{-1}$.
\end{proof}

We now give the short proof of \cref{corol:2_and_4_mf}.
\begin{proof}[Proof of \cref{corol:2_and_4_mf}]
  For a compact connected orientable surface $F$, the Euler characteristic
  satisfies $\chi(F)=2-b_1(F)$ and we have
  $n=2$, hence by \cref{corol:1stBetti} the combinatorial closed geodesics of length
  bounded by the number of edges in the triangulation and their reversing
  indices determines $\chi(F)$.

  If $M$ is a compact connected oriented manifold with $\dim(M)\leqslant 4$ we know
  a priori that $b_0(M)=b_4(M)=1$ and by \cref{corol:1stBetti} we determine
  $b_1(M)=b_3(M)$. Finally, $b_2(M)=\chi(M)-2+b_1(M)+b_3(M)$ is now determined
  by the Euler characteristic which can be read of from the combinatorial data
  of the triangulation (namely the number of simplices of different dimension).
\end{proof}
\subsection{The non-compact case: $L^2$-Betti numbers}
In this section we consider the setting of \cref{subsec:L2}: the compact
manifold $M$ with triangulation $\mathscr T$ comes with a normal covering
$\wh{M}$ with free action by a quotient $\pi$ of the fundamental group $\pi_1(M)$. The triangulation $\mathscr T$ lifts as $\wh{\mathscr{T}}$, for which we fix a basis $\{\hat{\sigma}^{n-1}_i\}_{i=1, \ldots |\mathscr T^{(n-1)}|}$ for the space $C_{n-1}(\wh{\mathscr{T}})$ as $\ZZ[\pi]$-module.

We denote by $\wh{\mathcal P}$ the set of primitive combinatorial closed geodesics in $\wh{\mathscr{T}}$ starting from one of the $\hat{\sigma}^{n-1}_i$.
Recall that $T$ is the transfer operator associated to the geodesic random walk on $\wh{\mathscr T}$ described in \cref{prop:Lapl}.

Then \cref{prop:trace} holds true in this setting, namely:
\begin{lemma}
\label{prop:L2trace}
For each $k = 1, 2, \dots$, it holds
\begin{equation}
\label{eq:transferL2}
\tr_{\mathrm{vN}} T^k = 
\sum_{|\gamma| = k} \varepsilon_\gamma |\gamma^\sharp|,
\end{equation}
where the sum runs over all combinatorial closed geodesics (not necessary primitive) which start at one of the $\hat{\sigma}^{n-1}_i$.
\end{lemma}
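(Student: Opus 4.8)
The plan is to mimic the proof of \cref{prop:trace} verbatim, with the ordinary matrix trace replaced by the von Neumann trace, and to check that the bookkeeping of closed geodesics is unaffected by the fact that we are now on the (possibly infinite) cover.

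First I would unwind the definition of the von Neumann trace and use $\pi$-equivariance of $T$. By \eqref{eq:L2tracedef} we have $\tr_{\mathrm{vN}} T^k = \sum_{i=1}^{c_{n-1}} \langle T^k \widehat\sigma_i^{n-1}, \widehat\sigma_i^{n-1}\rangle_{L^2}$. Expanding $T$ in the $\mathbb C[\pi]$-basis $\{\widehat\sigma_i^{n-1}\}$ of $C_{n-1}(\wh{\mathscr T})$, the operator $T$ is a matrix with entries in $\mathbb Z[\pi]$, and $\langle T^k \widehat\sigma_i^{n-1}, \widehat\sigma_i^{n-1}\rangle_{L^2}$ picks out the coefficient of the identity element $e \in \pi$ in the $(i,i)$-entry of $T^k$. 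Equivalently — and this is the cleaner way to phrase it — $\langle T^k \widehat\sigma_i^{n-1}, \widehat\sigma_i^{n-1}\rangle_{L^2}$ counts, with the sign $\varepsilon_\gamma$, the signed number of length-$k$ closed geodesic paths in $\wh{\mathscr T}$ that start \emph{and end} at the fixed lift $\widehat\sigma_i^{n-1}$ (not merely at some $\pi$-translate of it). Indeed, writing the $(i,i)$-entry of $T^k$ as $\sum_{i_1,\dots,i_{k-1}}\sum_{\alpha_1,\dots} T_{i,i_1}^{\alpha_1}\cdots T_{i_{k-1},i}^{\alpha_k}$ and extracting the coefficient of $e$ is exactly summing over closed paths $(\widehat\sigma_i^{n-1}, \alpha_1\widehat\sigma_{i_1}^{n-1},\dots)$ that return to $\widehat\sigma_i^{n-1}$ itself, each contributing the product of the signs $\varepsilon_{\tau,\sigma}$ along the edges, which is $\varepsilon_\gamma$ for the geodesic $\gamma$ it represents.

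Then I would sum over $i$. Summing $\langle T^k \widehat\sigma_i^{n-1}, \widehat\sigma_i^{n-1}\rangle_{L^2}$ over $i = 1,\dots, c_{n-1}$ counts the signed length-$k$ closed geodesic paths in $\wh{\mathscr T}$ that begin at one of the chosen lifts $\widehat\sigma_i^{n-1}$ and return to that same simplex. Exactly as in the proof of \cref{prop:trace}, a single closed geodesic $\gamma$ of length $k$ (necessarily with $|\gamma^\sharp|$ dividing $k$, and $\gamma$ a power of $\gamma^\sharp$) that starts at one of the $\widehat\sigma_i^{n-1}$ gives rise to exactly $|\gamma^\sharp|$ such based paths — one for each vertex of $\gamma^\sharp$ that happens to be one of the chosen lifts — no, more precisely: each cyclic starting point of $\gamma$ that lands on one of the $\widehat\sigma_i^{n-1}$ yields one based closed path, and since the $\widehat\sigma_i^{n-1}$ form a fundamental domain for the $\pi$-action on $(n-1)$-cells, among the $k$ cyclic rotations of a path representing $\gamma$ exactly $|\gamma^\sharp|$ of the underlying cells lie in the fundamental domain (one in each $\pi$-orbit met, counted with multiplicity $k/|\gamma^\sharp|$). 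Hence each such $\gamma$ contributes $\varepsilon_\gamma |\gamma^\sharp|$, and summing over all closed geodesics of length $k$ starting at one of the $\widehat\sigma_i^{n-1}$ gives $\sum_{|\gamma|=k}\varepsilon_\gamma|\gamma^\sharp|$, as claimed.

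The main obstacle, and the only place where care beyond the compact case is needed, is the convergence/boundedness bookkeeping: one must know that $T$ is a bounded $\pi$-equivariant operator on $C_{n-1}^{(2)}(\mathscr T,\pi)$ (so that $T^k$ and its von Neumann trace make sense) and that, although $\wh{\mathscr T}$ has infinitely many $(n-1)$-cells, the number of length-$k$ closed geodesic paths based at a \emph{fixed} $\widehat\sigma_i^{n-1}$ is finite — which is clear since each step of the walk has at most $N+1$ choices, so the relevant sums are finite for every $k$ and no convergence issue arises in \eqref{eq:transferL2}. Boundedness of $T$ follows from \cref{prop:Lapl}, since $\Delta_{n-1}^{(2)}$ is bounded and $T = (N+2)\Id - \Delta_{n-1}^{(2)}$; $\pi$-equivariance of $T$ was already recorded in \cref{subsec:L2}. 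With these two points noted, the argument of \cref{prop:trace} transfers word for word.
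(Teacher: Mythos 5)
Your overall route is the same as the paper's: the paper's proof of \cref{prop:L2trace} is literally the one-line remark that, with the von Neumann trace of \eqref{eq:L2tracedef} in place of the matrix trace, the argument of \cref{prop:trace} goes through. Your identification of $\langle T^k\widehat{\sigma}_i^{n-1},\widehat{\sigma}_i^{n-1}\rangle_{L^2}$ with the coefficient of $e\in\pi$ in the $(i,i)$-entry of $T^k$, i.e.\ with the signed count of length-$k$ geodesic loops in $\wh{\mathscr T}$ based at the fixed lift $\widehat{\sigma}_i^{n-1}$ itself, is exactly right, and the remarks on boundedness of $T$ and finiteness of the based count are fine (and are indeed the easy points).

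The step that genuinely needs care on the cover, however, is the passage from based loops to closed geodesics weighted by $|\gamma^\sharp|$, and your justification of it is false as stated. It is not true that, for a closed geodesic $\gamma$ in $\wh{\mathscr T}$ meeting the fundamental domain, ``exactly $|\gamma^\sharp|$ of the underlying cells lie in the fundamental domain, one in each $\pi$-orbit met'': a closed lift typically wanders out of $\mathcal F$, and among the cells it traverses in a given $\pi$-orbit none, or only some, are the chosen lifts. The number of based representatives of a single cyclic class $\gamma$ whose starting cell is one of the $\widehat{\sigma}_i^{n-1}$ equals the number of positions per period at which $\gamma$ meets $\mathcal F$, which is in general strictly smaller than $|\gamma^\sharp|$. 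The clean repair --- and the reading under which the bookkeeping really is ``word for word'' the compact one --- is to push the count down to $\mathscr T$: a based loop at $\widehat{\sigma}_i^{n-1}$ projects to a based loop at $\sigma_i^{n-1}$ whose lift closes up, i.e.\ whose image in $\pi$ is trivial, and this is a sign-preserving bijection. Downstairs every $(n-1)$-cell is a basis cell, so each closed geodesic contributes exactly $|\gamma^\sharp|$ based rotations as in \cref{prop:trace}; the only new point to check is that the trivial-monodromy condition is invariant under cyclic rotation, which holds because $\ker(\pi_1(M)\to\pi)$ is normal. Equivalently, upstairs one recovers the factor $|\gamma^\sharp|$ only after summing over the whole $\pi$-orbit $\{\alpha\cdot\gamma\}_{\alpha\in\pi}$ of a closed lift, so the sum in \eqref{eq:transferL2} must be understood as running over geodesics modulo the $\pi$-action (loops in $\mathscr T^{(n-1)}$ representing the trivial class in $\pi$, as in the corollary stated in the introduction), not over individual cyclic classes in $\wh{\mathscr T}$ weighted by their full primitive length.
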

\begin{proof}
Using the definition of the von Neumann trace given in \cref{eq:L2tracedef}, the proof is exactly the same as for \cref{prop:trace}.
\end{proof}

Now we prove \cref{theo:mainL2}, which we recall here:
\begin{theorem}
The function 
\begin{equation}
\label{zeta}
\zeta^{(2)}_{\wh{\mathscr T}}(z) = \prod_{\gamma \in \widehat{\mathcal P}} (1- \varepsilon_\gamma z^{|\gamma|})
\end{equation}
converges for $|z| \ll 1$, and has an analytic extension to the disk of diameter $(0, \frac 1 {n+2})$. Moreover,
$$
\zeta^{(2)}_{\wh{\mathscr T}}\left(\frac{1}{n+2}-z\right) = z^{b_1^{(2)}(M,\pi)} f(z) 
$$
with a function $f$ which is continuous at $0$.  If $\Delta_{n-1}^{(2)}$ is of
determinant class, then
\begin{equation*}
  f(0) =(n+2)^{2b_1^{(2)}(M,\pi)-|\mathscr T^{(n-1)}|} \cdot \detFK(\Delta_{n-1}^{(2)}).
\end{equation*}
If $\Delta_{n-1}^{(2)}$ is not of determinant class, then $f(0)=0$ but $f$
converges slower to $0$ than any power of $z$ in the sense that for all $C>0$,
$\alpha>0$ there is $\varepsilon>0$ such that
\begin{equation*}
  f(z)> Cz^\alpha\qquad\forall 0<z<\varepsilon.
\end{equation*}
In particular, the function $\zeta^{(2)}_{\wh{\mathscr T}}$ and hence the
combinatorial closed geodesics on $\widehat M$ together with the data of
$|\mathscr T^{(n-1)}|$ determine $b_{n-1}^{(2)}(M, \pi)$.
\end{theorem}

\begin{remark}
  The term $\det_{FK}(\Delta^{(2)}_{n-1})$ appearing in \cref{theo:mainL2} is
  a very delicate spectral invariant which highly depends on the specific
  triangulation and is not a topological invariant (only the property of being
  of determinant class is a topological invariant, even a homotopy invariant,
  of the manifold $M$). We therefore don't assign too much meaning to it.

  Note, however, that a combination of the Fuglede--Kadison determinants of
  all the combinatorial $L^2$-Laplacians gives the $L^2$-torsion of $M$, a
  very interesting topological invariant, compare \cite[Chapter 3]{Lueck}. This is similar to the compact Riemannian case, it is well--known that the Ray--Singer zeta determinant of the Laplacian acting on functions is not a topological invariant. However, a combination of the zeta determinants for the Laplacian acting on forms of all degrees gives the analytic torsion which happens to be a topological invariant.   
   A
  case of interest to us are closed 3-manifolds. Here,
  the $L^2$-torsion (of the universal covering) is proportional to the sum of
  the volumes of the hyperbolic pieces in the JSJ-decomposition by
  \cite{LueckSchick}. 
\end{remark}

\begin{proof}
Since the triangulation $\wh{\mathscr T}$ is a lift of a triangulation of a
compact manifold, there is a global upper-bound $K$ on the number of neighbors
of any $n-1$ simplex, in particular there are less than $K^k$ primitive
combinatorial closed geodesics of length $k$. It follows that the infinite product \eqref{zeta} converges for $|z| < K^{-1}$.

Now using \cref{prop:L2trace} and functional calculus one sees that 
\begin{equation}
\label{eq:logdet}
{\det}_{\mathrm{FK}}(\Id - zT) = \exp\left(- \sum_{k=1}^\infty \frac{z^k}{k} \tr_{\vN} T^k
\right)
\end{equation}
on the real interval $\left]-\frac 1 K, \frac 1 K\right[$, since there the
operator $\log(\Id - zT)$ is well-defined and is given by the power series in
\cref{eq:logdet}. One deduces just as in the proof of \cref{theo:fried} that
$$\zeta^{(2)}_{\wh{\mathscr T}}(z) = {\det}_{\mathrm{FK}}(\Id - z T).$$

We will use \cref{item:add} for the orthogonal decomposition $\Id = p_K \oplus q_K$
with $p_K$ the orthogonal projection onto $\ker(\Delta^{(2)}_{n-1})$ and
$q_K=1-p_K$. This decomposition is preserved by $\Delta^{(2)}_{n-1}$ and
satisfies of course $\Delta_{n-1}^{(2)}p_K=0$, and that
$\Delta^{(2)}_{n-1}q_K$ is injective on $\im(q_K)$. Note also that by
definition, one has 
$$
\dim_{\vN}(\im(p_K)) = b_{n-1}^{(2)}(M,\pi) =
b_1^{(2)}(M,\pi).
$$

Note that from \cref{prop:Lapl} one has
\begin{equation*}
  T = (n+2)\Id - \Delta_{n-1}^{(2)},
\end{equation*}
hence we obtain
\begin{equation*}
  \begin{split}
    \zeta^{(2)}_{\wh{\mathscr T}}\Bigl(&\frac{1}{n+2}-z\Bigr) =
                              {\det}_{\mathrm{FK}}\left(\Id-(\frac{1}{n+2}-z)T\right)\\
    &= {\det}_{\mathrm{FK}}\left(\Id-\left(\frac{1}{n+2}-z\right)
      \left((n+2)\Id-\Delta_{n-1}^{(2)}\right)\right) \\
  &=
    {\det}_{\mathrm{FK}}\left(\left(\left(\frac{1}{n+2}-z\right)\Delta_{n-1}^{(2)}+z(n+2)\Id\right)(p_K\oplus
    q_K)\right)\\
    &\stackrel{\eqref{item:add}}{=} {\det}_{\mathrm{FK}}(z(n+2)p_K) \cdot {\det}_{\mathrm{FK}}\left(\left((\frac{1}{n+2}-z)\Delta_{n-1}^{(2)}+z(n+2)\Id\right)q_K\right)\\
     &\stackrel{\eqref{item:mult}}{=} \left((n+2)z\right)^{b_1^{(2)}(M,\pi)}
    \left(\frac{1}{n+2}-z\right)^c\cdot
    \underbrace{{\det}_{\mathrm{FK}}\left(\Delta^{(2)}_{n-1}-\frac{z(n-2)}{(n+2)^{-1}-z}\Id\right)}_{\xrightarrow{z\to
    0} {\det}_{\mathrm{FK}}(\Delta_{n-1}^{(2)})}\\
&= z^{b_1^{(2)}(M,\pi)} \cdot f(z) 
  \end{split}
\end{equation*}
Here $c={\dim_{vN}(\im(q_K))}=|\mathscr{T}_{n-1}|-b_1^{(2)}(M,\pi) \geqslant 0$ is
  an irrelevant non-negative term and 
\begin{equation*}
  f(z) = (n+2)^{b_1^{(2)}(M,\pi)}\left(\frac{1}{n+2}-z\right)^c\cdot{\det}_{\mathrm{FK}}\left(\Delta^{(2)}_{n-1}-\frac{z(n-2)}{(n+2)^{-1}-z}\Id\right)
  \end{equation*}
   is continuous on the interval $[0, \frac{1}{n+2})$ with
   \begin{equation*}
     f(0)= (n+2)^{b_1^{(2)}(M,\pi) - (|\mathscr{T}_{n-1}|- b_1^{(2)} (M,\pi))} \cdot
     {\det}_{\mathrm{FK}}(\Delta^{(2)}_{n-1})
   \end{equation*}
   by \cref{item:conv}.

  By \cref{lem:FK}, the vanishing order of $f$ is zero even if
  $f(0)=0$ and hence the vanishing order of $\zeta^{(2)}_{\wh{\mathscr{T}}}$ at
  $\frac{1}{n+2}$ is precisely $b_1^{(2)}(M,\pi)$ which is hence determined by
  $\zeta^{(2)}_{\wh{\mathscr{T}}}$ and consequently by the combinatorial closed
  geodesics and their reversing indices.
\end{proof}

\section{Combinatorial linking number}
\label{sec:link}
In this section we prove \cref{theo:mainlink}. We take a compact oriented
3-manifold~$M$ with triangulation $\mathscr T$ and we let $\mathscr T^\vee$ be
a dual {polyhedral} decomposition of $\mathscr T$. Take two oriented knots
$\kappa_1 \in C_1(\mathscr T)$ and $\kappa_2 \in C_1(\mathscr T^\vee)$, which
we assume to be rationally homologically trivial in $M$, in the sense that
there are positive integers $p_1, p_2$ and $2$-dimensional chains $\sigma_1
\in C_2(\mathscr T)$ and $\sigma_2 \in C_2(\mathscr T^\vee)$ such that 
$$
\partial \sigma_j =p_j \kappa_j, \quad j = 1,2.
$$

Our definition of ``knot'' is very flexible, any closed (integral) $1$-chain $\kappa\in
C_1(\mathscr T)$ is permitted.

Recall e.g.~from \cite[Section 2.2]{Lescop_book} that the linking number of $\kappa_1$ and $\kappa_2$ is defined as the algebraic intersection number of $\sigma_1$ with $\kappa_2$ divided by $p_1$, which can be written  
\begin{equation}\label{eq:deflk}
\lk(\kappa_1, \kappa_2) =
\frac 1 {p_2} \langle \kappa_1, \star \sigma_2 \rangle \in \mathbb Q.
\end{equation}

Our aim is to compute this quantity with combinatorial means. 

\begin{definition}\label{def:orthogeodesic}
  We define $\mathcal G^\perp(\kappa_1, \kappa_2)$ {to be} the set of
  {\emph{orthogeodesic paths}} from $\kappa_1$ to $\kappa_2$,
  i.e.~combinatorial geodesic paths $c = (\tau_1, \ldots, \tau_k)$ (in the
  sense of \cref{def:geodesic}) in the $2$-skeleton of $\mathscr T$ such that
$$
|\langle \partial \tau_1, \kappa_1\rangle| > 0 \quad \text{ and } \quad
|\langle \tau_k, \star \kappa_2 \rangle | >0.$$ In other words, $c$ starts in
$\partial^\star \kappa_1$ and ends up in $\star \kappa _2$, see also
\cref{fig:link}.
\end{definition}

Again, we will denote by $|c|$ the length of $c$, and for each orthogeodesic path $c = (\tau_1, \dots, \tau_k) \in \mathcal G^\perp(\kappa_1, \kappa_2)$, we define the \textit{incidence number} of $c$ on $(\kappa_1, \kappa_2)$ as
$$
m_c = \langle \partial \tau_1, \kappa_1\rangle \, 
\langle \tau_k, \star \kappa_2 \rangle.
$$
If each knot $\kappa_j$ is \textit{simple}, in the sense that the boundary of
each $2$-simplex of $\mathscr T$ occurs  at most once in $\kappa_1$ and each
$1$-cell of $\mathscr T^\vee$ occurs in $\kappa_2$ at most once, then $m_c \in
\{-1, 1\}$. Although it will not be used in
the current paper, note that one can always find a triangulation $\mathscr T$ so that $\kappa_1$ (resp. $\kappa_2$) is homologous to a simple knot in $C_1(\mathscr T)$ (resp. $C_1(\mathscr T^\vee)$). Hence the incidence number can be thought as a sign.

Recall from \cref{def:reversing_ind}  the reversing index $\varepsilon_c =
(-1)^{n_c}$ of $c$. We have the following trace formula, analogous to \cref{prop:trace}, which describes how the entries of the transfer matrix $T$ {count} orthogeodesic paths.
\begin{lemma}
\label{prop:link}
For any $k \in \ZZ_{\geqslant 1}$, it holds
$$\langle T^{k-1} \partial^\star \kappa_1, \star \kappa_2 \rangle = \sum_{\substack{c \in \mathcal G^\perp(\kappa_1, \kappa_2) \\ |c| = k}} \varepsilon_c m_c.$$
\end{lemma}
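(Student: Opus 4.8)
The plan is to expand $\langle T^{k-1}\partial^\star\kappa_1,\star\kappa_2\rangle$ in the orthonormal basis $\beta_2$ of $C_2(\mathscr T)$ exactly as in the proof of \cref{prop:trace}, and then identify each nonzero term with a unique orthogeodesic path. First I would write $\partial^\star\kappa_1=\sum_{\tau\in\beta_2}\langle\partial^\star\kappa_1,\tau\rangle\,\tau=\sum_{\tau\in\beta_2}\langle\kappa_1,\partial\tau\rangle\,\tau$, so that the coefficient of $\tau_1$ is precisely $\langle\partial\tau_1,\kappa_1\rangle$, which is the "left endpoint weight" of the candidate path. Similarly the pairing against $\star\kappa_2$ extracts, for a $2$-simplex $\tau_k$, the coefficient $\langle\tau_k,\star\kappa_2\rangle$, the "right endpoint weight". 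Inserting the definition of $T$ from \cref{prop:Lapl}, we get
\begin{align*}
\langle T^{k-1}\partial^\star\kappa_1,\star\kappa_2\rangle
&=\sum_{\tau_1,\dots,\tau_k\in\beta_2}\langle\partial\tau_1,\kappa_1\rangle\,
T_{\tau_k,\tau_{k-1}}\cdots T_{\tau_2,\tau_1}\,\langle\tau_k,\star\kappa_2\rangle,
\end{align*}
where by definition $T_{\tau_{j+1},\tau_j}=\varepsilon_{\tau_j,\tau_{j+1}}$ if $(\tau_j,\tau_{j+1})$ is admissible and $0$ otherwise.

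Next I would observe that a tuple $(\tau_1,\dots,\tau_k)$ contributes a nonzero term iff each consecutive pair $(\tau_j,\tau_{j+1})$ is admissible — i.e.\ $(\tau_1,\dots,\tau_k)$ is a geodesic path in the sense of the introduction — and iff $\langle\partial\tau_1,\kappa_1\rangle\neq0$ and $\langle\tau_k,\star\kappa_2\rangle\neq0$; by the definition of $\mathcal G^\perp(\kappa_1,\kappa_2)$ these are exactly the orthogeodesic paths from $\kappa_1$ to $\kappa_2$ of length $k$. For such a path $c=(\tau_1,\dots,\tau_k)$, the product of transfer entries $\prod_{j=1}^{k-1}\varepsilon_{\tau_j,\tau_{j+1}}=(-1)^{n_c}=\varepsilon_c$ by the definition of the reversing index $n_c$ as the number of non-compatible consecutive pairs along $c$, and the two boundary pairings multiply to give $m_c=\langle\partial\tau_1,\kappa_1\rangle\,\langle\tau_k,\star\kappa_2\rangle$. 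Thus each orthogeodesic path of length $k$ contributes exactly $\varepsilon_c m_c$, and summing yields the claimed identity.

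The only genuinely delicate point — and the one I expect to be the main obstacle — is bookkeeping the endpoint conditions so that the index set of the sum matches the definition of $\mathcal G^\perp(\kappa_1,\kappa_2)$ on the nose: one must check that no orthogeodesic path is counted twice and none is omitted, in particular that the "starting polyhedron $\tau_1$ bounds part of $\kappa_1$" condition is precisely $\langle\partial\tau_1,\kappa_1\rangle\neq0$ and that "$\kappa_2$ intersects $\tau_k$" is precisely $\langle\tau_k,\star\kappa_2\rangle\neq0$, which follows from the definition of the Hodge star and the intersection pairing. There is also a minor sign/orientation check hidden in matching $T_{\tau_{j+1},\tau_j}$ with $\varepsilon_{\tau_j,\tau_{j+1}}$, but this is symmetric in the two indices by \cref{def:compa}. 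Once these identifications are made, the statement is immediate; no convergence or spectral input is needed here, since this is a purely finite-dimensional combinatorial identity (the analytic continuation and evaluation at $z=1/(N+2)$ in \cref{theo:mainlink} will be handled afterwards using \cref{prop:Lapl} and $\Delta_1=\partial^\star\partial$ on $1$-chains).
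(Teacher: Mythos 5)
Your proposal is correct and follows essentially the same route as the paper: expand the matrix power of $T$ in the orthonormal basis $\beta_2$, identify the nonzero terms with orthogeodesic paths of length $k$, and observe that the product of transfer entries gives $\varepsilon_c$ while the two endpoint pairings $\langle\partial\tau_1,\kappa_1\rangle$ and $\langle\tau_k,\star\kappa_2\rangle$ multiply to $m_c$. If anything, your explicit tracking of the endpoint weights is slightly more careful than the paper's phrasing (``the number of occurrences of such a path $c$ is exactly $m_c$''), which compresses the same bookkeeping into one sentence.
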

\begin{proof}
For $\sigma, \tau \in  \beta_2$, one has
$$(T^{k-1})_{\sigma, \tau} = \sum_{\sigma_1, \ldots, \sigma_{k-2}} T_{\sigma, \sigma_1} T_{\sigma_1,\sigma_2}\ldots T_{\sigma_{k-2},\tau}.$$
By definition, if $c = (\sigma, \sigma_1, \ldots, \sigma_{k-2}, \tau)$ is an orthogeodesic from $\sigma \in \partial^\star \kappa_1$ to $\tau \in \star \kappa_2$, then
$$T_{\sigma, \sigma_1} T_{\sigma_1,\sigma_2}\ldots T_{\sigma_{k-2},\tau} = \varepsilon_c.$$
Moreover, the sum of contributions of this path $c$ in the scalar
product is exactly $m_c$, and it proves the lemma.
\end{proof}

We are now ready to prove \cref{theo:mainlink}, which we recall.
\begin{theorem}
\label{theo:link}
Let $\rho(T)$ be the spectral radius of the transfer matrix $T$. The series
$$
\eta(z) = \sum_{c \in \mathcal G^\perp(\kappa_1, \kappa_2)} \varepsilon_c m_c z^{|c|},$$ 
converges for $|z| <\frac 1 {\rho(T)}$. It defines a rational function of $z$, which is regular at $z=1/(n+2)$ with
$$
\eta\left(\frac 1{n+2}\right) = \lk(\kappa_1, \kappa_2).
$$
\end{theorem}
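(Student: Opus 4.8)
The plan is to recognize $\eta$ as a matrix coefficient of the resolvent of the transfer operator $T$ on $C_2(\mathscr T)$ (here $n=3$, so $\Delta_{n-1}=\Delta_2$), and then to use $\Delta_2=(N+2)\Id-T$ from \cref{prop:Lapl} together with finite-dimensional Hodge theory to read off the value at $z=\frac1{N+2}$. Concretely, I would first use \cref{prop:link} and group orthogeodesics by length to get $\eta(z)=\sum_{k\geqslant1}z^k\langle T^{k-1}\partial^\star\kappa_1,\star\kappa_2\rangle$; by the spectral radius formula this power series has radius of convergence at least $\rho(T)^{-1}$, and for $|z|<\rho(T)^{-1}$ the Neumann series $\sum_{k\geqslant1}z^kT^{k-1}=z(\Id-zT)^{-1}$ converges in operator norm, whence
$$\eta(z)=z\,\langle(\Id-zT)^{-1}\partial^\star\kappa_1,\star\kappa_2\rangle.$$
The entries of $(\Id-zT)^{-1}$ are rational in $z$ (denominator $\det(\Id-zT)$), so this exhibits $\eta$ as a rational function, the identity then holding for all $z\in\CC$ away from poles.

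Next I would prove regularity at $z_0=\frac1{N+2}$. Writing $\Id-zT=(1-z(N+2))\Id+z\Delta_2$ and splitting $C_2(\mathscr T)=W\oplus\ker\Delta_2$ with $W=(\ker\Delta_2)^\perp$, the operator $\Id-zT$ is block-diagonal for this decomposition and its restriction to $W$ equals $z_0\Delta_2|_W$ at $z=z_0$, which is invertible. The key point is that $\partial^\star\kappa_1\in W$: a harmonic chain $h\in\ker\Delta_2$ is a cycle (since $\langle\Delta_2h,h\rangle=\|\partial h\|^2+\|\partial^\star h\|^2$), so $\langle\partial^\star\kappa_1,h\rangle=\langle\kappa_1,\partial h\rangle=0$. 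Hence $(\Id-zT)^{-1}\partial^\star\kappa_1=\bigl((\Id-zT)|_W\bigr)^{-1}\partial^\star\kappa_1$ is regular at $z_0$, so $\eta$ is regular there with
$$\eta(z_0)=z_0\,\bigl\langle(z_0\Delta_2|_W)^{-1}\partial^\star\kappa_1,\star\kappa_2\bigr\rangle=\langle G_2\partial^\star\kappa_1,\star\kappa_2\rangle,$$
$G_2$ being the Green operator of $\Delta_2$ (equal to $(\Delta_2|_W)^{-1}$ on $W$ and to $0$ on $\ker\Delta_2$).

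It remains to identify this value with $\lk(\kappa_1,\kappa_2)$. Set $x=G_2\partial^\star\kappa_1\in W$, so $\Delta_2x=\partial^\star\kappa_1$. Since $\partial$ commutes with the combinatorial Laplacians and $\partial\kappa_1=0$, we get $\Delta_1(\partial x)=\partial\Delta_2x=\partial\partial^\star\kappa_1=\Delta_1\kappa_1$, so $\partial x-\kappa_1\in\ker\Delta_1$. But $\partial x$ is a boundary and $\kappa_1$ is rationally null-homologous, so $[\partial x-\kappa_1]=0$ in $H_1(M;\RR)$; since harmonic $1$-chains inject into $H_1(M;\RR)$, this forces $\partial x=\kappa_1$. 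Thus $x$ is a (rational) $2$-chain with $\partial x=\kappa_1$, and the definition \eqref{eq:deflk} of the linking number --- valid with real coefficients by linearity and independent of the choice of bounding chain since $\kappa_2$ is null-homologous --- gives $\lk(\kappa_1,\kappa_2)=\langle x,\star\kappa_2\rangle=\eta(z_0)$.

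I expect the last step to be the main obstacle: the rational null-homology of $\kappa_1$ must be used essentially to upgrade ``$\partial x=\kappa_1$ modulo a harmonic chain'' to an exact equality, and one also relies on the (easy but crucial) fact that $\partial^\star\kappa_1\perp\ker\Delta_2$ --- without it, $(\Id-zT)^{-1}$ genuinely has a pole at $z_0$ and $\eta$ would fail to be regular there. The verification that $\partial$ commutes with the Laplacians and that $\partial\partial^\star\kappa_1=\Delta_1\kappa_1$ (using $\partial\kappa_1=0$) is routine and I would only state it.
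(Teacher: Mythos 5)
Your proposal is correct, and its first half is the same as the paper's: both identify $\eta(z)$ with a matrix coefficient of the resolvent of $T$ via the trace formula of \cref{prop:link} and the relation $\Delta_2=(N+2)\Id-T$ of \cref{prop:Lapl}, which gives convergence for $|z|<\rho(T)^{-1}$ and rationality. Where you genuinely diverge is the endgame. The paper does its Hodge theory in degree $1$: it introduces the Green operator $K$ of $\Delta_1$, observes $\partial\partial^\star Kp_1\kappa_1=p_1\kappa_1$ (immediate because $p_1\kappa_1\in\im\partial$), so that $\partial^\star Kp_1\kappa_1$ is an explicit bounding chain and $\lk(\kappa_1,\kappa_2)=\tfrac1{p_1}\langle\partial^\star Kp_1\kappa_1,\star\kappa_2\rangle$ \emph{before} any limit is taken; regularity at $z_0=(N+2)^{-1}$ then follows from the resolvent identity $(\Delta+\omega)^{-1}=K-\omega(\Delta+\omega)^{-1}K$ on $(\ker\Delta)^\perp$ together with $p_1\kappa_1\in\im\partial\perp\ker\Delta_1$. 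You instead work entirely in degree $2$: you justify regularity by the orthogonality $\partial^\star\kappa_1\perp\ker\Delta_2$, evaluate the limit as $\langle G_2\partial^\star\kappa_1,\star\kappa_2\rangle$, and only afterwards prove that $x=G_2\partial^\star\kappa_1$ bounds $\kappa_1$, which costs you two extra (correct) ingredients the paper never needs: the intertwining $\partial\Delta_2=\Delta_1\partial$ plus $\partial\kappa_1=0$, and the injectivity of harmonic $1$-chains into $H_1(M;\RR)$ combined with the rational null-homology of $\kappa_1$ to upgrade $\partial x-\kappa_1\in\ker\Delta_1$ to $\partial x=\kappa_1$. Since $G_2\partial^\star=\partial^\star K$ by this same intertwining, the two final chains coincide, so the proofs compute the same quantity; the paper's ordering is a bit shorter, while yours makes the source of regularity at $z_0$ (orthogonality to $\ker\Delta_2$) and the distinction between $T$ acting on $C_2$ versus $\Delta$ acting on $C_1$ --- which the paper's displayed formula blurs --- more transparent, and it makes explicit the well-definedness of \eqref{eq:deflk} over $\RR$ and its independence of the bounding chain (using that $\kappa_2$ is rationally null-homologous), which the paper uses tacitly.
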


\begin{proof}
Finite dimensional Hodge theory gives
$$C_1(\mathscr T) = \im \partial \oplus \im \partial^\star \oplus \ker \Delta.$$
Moreover, $\Delta$ preserves this decomposition, hence it maps $\im( \partial)$
(resp. $\im( \partial^\star)$) to itself isomorphically. We denote by $K_1\colon
\im(\partial) \to \im(\partial)$ the operator which is the inverse of
$\Delta_1$ on $\im( \partial)\subset C_1(\mathscr T)$.
Similarly, we let $K_2\colon \im(\partial^\star)\to \im(\partial^\star)$ be
the inverse of $\Delta_2$ on $\im(\partial^\star)\subset C_2(\mathscr T)$.
Note that $\Delta_2\partial^\star=\partial^*\Delta_1$ and hence also
$K_2\partial^\star = \partial^\star K_1$.
Now, since $p_1\kappa_1 \in \im \partial$ and $\Delta_1$ is invertible on $\im \partial$,  $K_1\kappa_1$ is well--defined and one has
$$
\partial \partial^\star K_1 \kappa_1 = \Delta_1K_1\kappa_1= \kappa_1,
$$
and thus by \cref{eq:deflk}, one gets
\begin{equation}\label{eq:formulalink}
  \begin{split}
    \lk(\kappa_1, \kappa_2) &=\frac 1 {p_2} \langle \partial \partial^\star K_1
                              p_1 \kappa_1, \star\sigma_2\rangle
     = \frac{1}{p_2} \langle \partial^\star
      K_1\kappa_1,\star\partial \star\star\sigma_2\rangle \\
    &= \frac{1}{p_2} \langle \partial^\star K_1\kappa_1,\star p_2\kappa_2\rangle
      = \langle\partial^\star K_1\kappa_1,\kappa_2\rangle.
  \end{split}
\end{equation}
Now if $\frac 1 z$ is not in the spectrum of $T$, we have
\begin{equation}
\label{1}
\left\langle  \left( \frac 1 z \Id - T\right)^{-1}
 \partial^\star \kappa_1, \star\kappa_2\right\rangle = \left\langle \left(\Delta_2 + \left(\frac 1 z - (n+2)\right) \Id\right)^{-1}  \partial^\star \kappa_1, \star\kappa_2\right\rangle 
\end{equation}
by \cref{prop:Lapl}. For $|z| <\frac 1 {\rho(T)}$ we may expand \cref{1} to get
$$
 \left\langle \left(\Delta_2 + \left(\frac 1 z - (n+2)\right)
     \Id\right)^{-1} \partial^\star \kappa_1, \star\kappa_2\right\rangle = \sum_{k=1}^\infty z^{k} \langle T^{k-1}\partial^\star  \kappa_1, \star\kappa_2 \rangle = \eta(z)
$$
where the last equality comes from  \cref{prop:link}. This shows that $\eta(z)$ is a rational function in $z$. Finally, note that $\Delta_2 + \omega$ is invertible on $\im(\partial^\star)$ for $|\omega|$ small, with
$$
(\Delta_2 + \omega)^{-1} = K_2 - \omega(\Delta_2 + \omega)^{-1}K_2.
$$
Applying this with  $\omega = \frac 1 z - (n+2)$, evaluating at $z=\frac{1}{n+2}$, we obtain 
$$\eta\left(\frac 1{n+2}\right) = \langle
K_2\partial^\star\kappa_1,\star\kappa_2\rangle = \langle \partial^\star K_1 \kappa_1, \kappa_2\rangle = \lk(\kappa_1, \kappa_2\rangle,$$ which concludes the proof.
\end{proof}

\begin{remark}
The operator $\partial^*K_1$ is the discrete Hodge theoretic version of
linking forms appearing in numerous articles like \cite{vogel},
\cite{harris2004}. The idea of using the Laplacian to produce linking forms
probably goes back to the Gauss integral \cite[Definition
15.4.1]{dubrovin2012modern} who already used the Schwartz kernel of
$d^*\Delta^{-1}$ to compute linking numbers. The dynamical version of linking
forms which inspired our approach comes from~\cite{Dang_Riviere} and
also~\cite[
Section 12]{harvey2001finite}. Most relevant to our discussion is the work by
Delsarte~\cite{delsarte} and Huber~\cite{huber1956neue,huber1959analytischen}
who were able to relate Poincar\'e series on surfaces of constant negative
curvature and the Laplacian. This allowed these authors to prove the analytic
continuation exactly in the same spirit as Selberg's work relating zeta functions to the Laplacian.   
\end{remark}

\section{Examples and final remarks}

In this section, we briefly indicate some example situations for our main
results. Unfortunately, triangulations of manifolds (even when the
generalization of \cref{rem:generalize_trian} is taken into account) turn out to be quite complicated even for simple manifolds,
therefore we will not compute enough terms of the zeta functions to actually
derive interesting consequences. But we also would like to point out that we do not know any surface of constant negative curvature where the length of closed geodesics is known so in fact the Selberg zeta function cannot be explicitely computed.

There is one (simple) exception:
\begin{example}
  Consider $\partial\Delta^{n+1}$, the boundary of the standard $n+1$-simplex
  as triangulation $\mathscr T_{\partial\Delta^{n+1}}$ of the $n$-sphere $S^n$.

  Here, any two $n-1$-simplices which have a common face bound a common
  $n$-simplex. Consequently, there are no combinatorial geodesic paths and
  loops at all, and $\zeta_{\mathscr T_{\partial\Delta^{n+1}}}(z)=1$ is the
  constant function $1$.

  The vanishing order at $(n+2)^{-1}$ hence is $0$, compatible with the fact
  that $b_1(S^n)=0$ for $n>1$.
\end{example}

\begin{example}
  Consider the quasi-triangulation of the $2$-torus given by the following
  \cref{fig:torus}. Note that, as usual, we have to identify the top and
  bottom segment as well as the left and right segment.

  Note that this is not quite a triangulation as any pair of 2-simplices which
  does not share an edge does intersect in two distinct vertices.

  The shortest combinatorial closed geodesics are of length $2$. An inspection
  shows that these are obtained as the straight horizontal, vertical and
  diagonal lines in the picture. One example is the visible diagonal drawn in
  \cref{fig:torus} in blue. Passing through them in the opposite
  direction here is a cyclic permutation (because of length $2$), hence we
  have 6 combinatorial closed geodesics of length $2$. For each of them, the
  reversing index is $1$.

  The contribution of these shortest closed geodesics to the zeta function is
  therefore
  \begin{equation*}
    (1-z^2)^6.
  \end{equation*}

There are no combinatorial closed geodesics of length $3$, the candidates drawn
in \cref{fig:torus} in red or green are not permitted as the consecutive horizontal and
vertical edge bound the same triangle (the bottom right one or the top right one).

But there are many combinatorial geodesic paths of length 4 and it would be
challenging to make a precise list of them.
  
\begin{figure}[h]
\caption{\label{fig:torus} A permitted $\Delta$-complex decomposition of the
  torus which is not quite a triangulation. The top and bottom segment have to
  be identified as well as the left and right one.}
  \vspace{0.2cm}

 \begin{tikzpicture}
   \draw[fill=black] (0,0) circle (   1pt);
   \draw[fill=black] (1,0) circle (1pt);
   \draw[fill=black] (1,1) circle (1pt);
   \draw[fill=black] (0,1) circle (1pt);
   \draw[fill=black] (0,2) circle (   1pt);
   \draw[fill=black] (1,2) circle (   1pt);
   \draw[fill=black] (2,0) circle (1pt);
   \draw[fill=black] (2,1) circle (1pt);
   \draw[fill=black] (2,2) circle (1pt);
   \draw[teal,-Stealth] (0,0) -- (0.5,0);
   \draw[teal] (0.5,0) -- (1,0);
   \draw[-Stealth] (1,0) -- (1.5,0);
   \draw (1.5,0) -- (2,0);
   \draw[-Stealth] (0,1) -- (0.5,1);
   \draw (0.5,1) -- (1,1);
   \draw[-Stealth] (1,1) -- (1.5,1);
   \draw (1.5,1) -- (2,1);
   \draw[-Stealth] (0,2) -- (0.5,2);
   \draw (0.5,2) -- (1,2);
   \draw[purple,-Stealth] (1,2) -- (1.5,2);
   \draw[purple] (1.5,2) -- (2,2);
   \draw[-Stealth] (1,0) -- (1.5,0);
   \draw (1.5,0) -- (2,0);
\draw[purple,-Stealth] (0,0) -- (0,0.5); \draw[purple] (0,0.5) -- (0,1);
\draw[-Stealth] (0,1) -- (0,1.5); \draw (0,1.5) -- (0,2);
\draw[-Stealth] (1,0) -- (1,0.5); \draw (1,0.5) -- (1,1);
\draw[-Stealth] (1,1) -- (1,1.5); \draw (1,1.5) -- (1,2);
\draw[-Stealth] (2,0) -- (2,0.5); \draw (2,0.5) -- (2,1);
\draw[teal,-Stealth] (2,1) -- (2,1.5); \draw[teal] (2,1.5) -- (2,2);

   \draw[orange,-Stealth] (0,0) -- (0.5,0.5);
   \draw[orange] (0.5,0.5) -- (1,1);
   \draw[teal,-Stealth] (1,0) -- (1.5,0.5);
   \draw[teal] (1.5,0.5) -- (2,1);
   \draw[purple,-Stealth] (0,1) -- (0.5,1.5);
   \draw[purple] (0.5,1.5) -- (1,2);
   \draw[orange,-Stealth] (1,1) -- (1.5,1.5);
   \draw[orange] (1.5,1.5) -- (2,2);

 \end{tikzpicture}
\end{figure}

The given quasi-triangulation of the torus of course lifts to a triangulation
of the universal covering $\RR^2$. The picture is obtained from
\cref{fig:torus} by periodically repeating in all directions.

The shortest combinatorial closed geodesics here have length $6$, one example
is given by concatenating the red and the green geodesics in
\cref{fig:torus}. Here, the reversing number is $2$, hence the reversing index
again is $1$. 
\end{example}

\subsection{Concluding remarks}

The fundamental identity of our work is \cref{eq:transfer_eq} which relates
the combinatorial Laplacian to the transfer operator of a random walk. In our
case, this is a signed random walk on the $n-1$-skeleton of a manifold.

As mentioned earlier, a similar approach has been used on the $0$-skeleton
(and dually also $1$-skeleton) of a regular graph (which could arise as the
$1$-skeleton of a CW-complex). There, it gives rise to an
honest (non-signed) random walk on the graph.

The key point of our approach is the packaging of the combinatorial dynamical
information in a zeta function. This encodes the spectral information in a
different way from the approach of Varopoulos
\cite{Varopoulos}. Varopoulos' result is more direct, but he uses significantly
the positivity of the random walk, which is not available in our context.

In the case of graphs and more specifically trees with a cocompact action
of a group $\pi$, there are also approaches to define and study zeta functions
and relate them to the combinatorial Laplacian. A specific example here is the
Ihara zeta function as introduced and studied by Bass in \cite{Bass92}. On a
formal level, many fundamental properties are similar to ours, compare
e.g.~\cite[Theorem 3.9]{Bass92}. On the other hand, there are also many
fundamental differences: Bass uses non-commutative determinants in the style
of Hattori--Stallings. They are finer, but much more intricate than our
Fuglede--Kadison invariant, whereas the latter can be defined and manipulated
in more general situations. Bass deals with proper actions which are not
necessarily free (free actions on trees occur only by free groups).

This leaves an interesting question in our context: can one prove suitable
generalizations of Theorem \ref{theo:mainL2} when we have a simplicial action
on a non-compact manifold $\wh M$ which is proper and cocompact, but not free
(i.e.~with finite stabilizers of simplices)? A suitable theory of
$L^2$-invariants in this context exists, see \cite[Section 6]{Lueck}.

Another question which we leave open is inspired by the work of Varopoulos
\cite{Varopoulos} and H\"opfner \cite{Hoepfner} which both get
informations about the Novikov-Shubin invariant of the manifold from the
(signed) random walk. It seems plausible the Novikov-Shubin invariant of
$\Delta_1^{(2)}$ is encoded in the behavior of the
``secondary function'' $f(z)$ of \cref{theo:mainL2} which describes the deviation
from the power function $z^{b_1^{(2)}(M,\pi)}$ of the $L^2$-zeta function near
$\frac{1}{n+2}$.

\bibliographystyle{plain} 
\bibliography{biblio} 

\begin{thebibliography}{10}

\bibitem{Anantharaman}
Nalini Anantharaman.
\newblock Valeur en 0 des séries de poincaré des surfaces et des graphes,
  January 2024.

\bibitem{Atiyah}
Michael~F. Atiyah.
\newblock Elliptic operators, discrete groups and von {Neumann} algebras.
\newblock In {\em Colloque ``Analyse et Topologie'' en l'honneur de Henri
  Cartan.}, pages 43--72. Paris: Soci{\'e}t{\'e} Math{\'e}matique de France
  (SMF), 1976.

\bibitem{Bass92}
Hyman Bass.
\newblock The {Ihara}-{Selberg} zeta function of a tree lattice.
\newblock {\em Int. J. Math.}, 3(6):717--797, 1992.

\bibitem{Bredon}
Glen~E. Bredon.
\newblock {\em Topology and geometry}, volume 139 of {\em Grad. Texts Math.}
\newblock New York: Springer-Verlag, 1993.

\bibitem{chaubet2022poincare}
Yann Chaubet.
\newblock Poincar{\'e} series for surfaces with boundary.
\newblock {\em Nonlinearity}, 35(12):5993, 2022.

\bibitem{DM24}
Nguyen-Bac Dang and Vler{\"e} Mehmeti.
\newblock Variation of the {Hausdorff} dimension and degenerations of
  {Schottky} groups.
\newblock Preprint, {arXiv}:2401.06107 [math.{AG}] (2024), 2024.

\bibitem{Dang_Riviere}
Nguyen~Viet Dang and Gabriel Rivi\`ere.
\newblock Poincar\'e series and linking of legendrian knots.
\newblock {\em arXiv preprint 2005.13235}, 2020.

\bibitem{delsarte}
Jean Delsarte.
\newblock Sur le gitter fuchsien.
\newblock {\em CR Acad. Sci. Paris}, 214(147-179):1, 1942.

\bibitem{dubrovin2012modern}
Boris~A Dubrovin, Anatolij~Timofeevi{\v{c}} Fomenko, and Serge{\u\i} Novikov.
\newblock {\em Modern geometry—methods and applications: Part II: The
  geometry and topology of manifolds}.

\bibitem{DZ}
Semyon Dyatlov and Maciej Zworski.
\newblock Ruelle zeta function at zero for surfaces.
\newblock {\em Invent. Math.}, 210(1):211--229, 2017.

\bibitem{fernandez2013random}
Roberto Fern{\'a}ndez, J{\"u}rg Fr{\"o}hlich, and Alan~D Sokal.
\newblock {\em Random walks, critical phenomena, and triviality in quantum
  field theory}.
\newblock Springer Science \& Business Media, 2013.

\bibitem{Fried}
David {Fried}.
\newblock {Fuchsian groups and Reidemeister torsion}.
\newblock {The Selberg trace formula and related topics, Proc. AMS-IMS-SIAM
  Joint Summer Res. Conf., Brunswick/Maine 1984, Contemp. Math. 53, 141-163
  (1986).}, 1986.

\bibitem{Fried1}
David Fried.
\newblock The zeta functions of {R}uelle and {S}elberg. {I}.
\newblock {\em Ann. Sci. \'{E}cole Norm. Sup. (4)}, 19(4):491--517, 1986.

\bibitem{GJ}
James Glimm and Arthur Jaffe.
\newblock {\em Quantum physics: a functional integral point of view}.
\newblock Springer Science \& Business Media, 2012.

\bibitem{GLSZ}
Rostislav~I. Grigorchuk, Peter Linnell, Thomas Schick, and Andrzej {\.Z}uk.
\newblock On a question of {Atiyah}.
\newblock {\em C. R. Acad. Sci., Paris, S{\'e}r. I, Math.}, 331(9):663--668,
  2000.

\bibitem{harris2004}
Bruno Harris.
\newblock {\em Iterated integrals and cycles on algebraic manifolds}, volume~7.
\newblock World Scientific, 2004.

\bibitem{harvey2001finite}
F~Reese Harvey and H~Blaine Lawson.
\newblock Finite volume flows and morse theory.
\newblock {\em Annals of Mathematics}, 153(1):1--25, 2001.

\bibitem{Hoepfner}
Tim H\"opfner.
\newblock {\em Novikov-Shubin invariants on nilpotent Lie groups}.
\newblock doctoral thesis, Georg-August Universit\"at G\"ottingen, 2023.

\bibitem{huber1956neue}
Heinz Huber.
\newblock {\"U}ber eine neue klasse automorpher funktionen und ein
  gitterpunktproblem in der hyperbolischen ebene. i.
\newblock {\em Commentarii Mathematici Helvetici}, 30(1):20--62, 1956.

\bibitem{huber1959analytischen}
Heinz Huber.
\newblock Zur analytischen theorie hyperbolischer raumformen und
  bewegungsgruppen.
\newblock {\em Mathematische Annalen}, 138(1):1--26, 1959.

\bibitem{Lescop_book}
Christine Lescop.
\newblock {\em Invariants of links and 3-manifolds from graph configurations}.
\newblock EMS Monogr. Math. Z{\"u}rich: European Mathematical Society (EMS),
  2024.

\bibitem{LueckSchick}
W.~L{\"u}ck and T.~Schick.
\newblock {{\(L^2\)}}-torsion of hyperbolic manifolds of finite volume.
\newblock {\em Geom. Funct. Anal.}, 9(3):518--567, 1999.

\bibitem{Lueck}
Wolfgang L\"{u}ck.
\newblock {\em {$L^2$}-invariants: theory and applications to geometry and
  {$K$}-theory}, volume~44 of {\em Ergebnisse der Mathematik und ihrer
  Grenzgebiete. 3. Folge.}
\newblock Springer-Verlag, Berlin, 2002.

\bibitem{nicolaescu2008reidemeister}
Liviu~I Nicolaescu.
\newblock {\em The Reidemeister torsion of 3-manifolds}, volume~30.
\newblock Walter de Gruyter, 2008.

\bibitem{ruelle1976zeta}
David Ruelle.
\newblock Zeta-functions for expanding maps and anosov flows.
\newblock {\em Inventiones mathematicae}, 34(3):231--242, 1976.

\bibitem{Schick}
Thomas Schick.
\newblock {$L^2$}-determinant class and approximation of {$L^2$}-{B}etti
  numbers.
\newblock {\em Trans. Amer. Math. Soc.}, 353(8):3247--3265, 2001.

\bibitem{STbook}
H.~Seifert and W.~Threlfall.
\newblock {\em A textbook of topology. {Transl}. by {Michael} {A}. {Goldman}.
  {Seifert}, {H}.: {Topology} of 3-dimensional fibered spaces. {Transl}. by
  {Wolfgang} {Heil}. {Ed}. by {Joan} {S}. {Birman} and {Julian} {Eisner}},
  volume~89 of {\em Pure Appl. Math., Academic Press}.
\newblock Academic Press, New York, NY, 1980.

\bibitem{Selberg}
Atle Selberg.
\newblock Harmonic analysis and discontinuous groups in weakly symmetric
  {Riemannian} spaces with applications to {Dirichlet} series.
\newblock {\em J. Indian Math. Soc., New Ser.}, 20:47--87, 1956.

\bibitem{Varopoulos}
Nicolas~Th. Varopoulos.
\newblock Brownian motion and random walks on manifolds.
\newblock {\em Ann. Inst. Fourier (Grenoble)}, 34(2):243--269, 1984.

\bibitem{vogel}
Thomas Vogel.
\newblock On the asymptotic linking number.
\newblock {\em Proceedings of the American Mathematical Society},
  131(7):2289--2297, 2003.

\end{thebibliography}

 

\end{document}